\documentclass[letter]{amsart}
\usepackage{hyperref}
\usepackage{amsmath,amsfonts,amsthm,amssymb}
\usepackage{graphicx}
\usepackage{color}
\usepackage{caption}
\usepackage{subcaption}


\theoremstyle{plain}
\newtheorem{theorem}{Theorem}
\newtheorem{hypothesis}{Hypothesis}
\newtheorem{lemma}{Lemma}

\theoremstyle{remark}
\newtheorem*{remark}{Remark}
\newtheorem*{remarks}{Remarks}

\theoremstyle{definition}
\newtheorem*{definition}{Definition}

\newtheorem*{conjecture}{Conjecture}

\newtheoremstyle{hyp}
{}  
{}  
{\itshape} 
{0pt}    
{\bfseries} 
{)}     
{5pt plus 1pt minus 1pt} 
{}     

\theoremstyle{hyp}
\newtheorem{h}{(H\kern0pt}

\DeclareMathOperator{\supp}{supp}

\DeclareMathOperator{\id}{Id}

\DeclareMathOperator{\const}{constant}

\DeclareMathOperator{\interior}{Int}
\DeclareMathOperator{\dv}{div}
\DeclareMathOperator{\dist}{dist}
\DeclareMathOperator{\e}{e}

\newcommand{\M}{\mathbb{M}}
\newcommand{\K}{\mathbb{K}}
\newcommand{\Z}{\mathbb{Z}}
\newcommand{\R}{\mathbb{R}}
\newcommand{\dd}{\,\mathrm{d}}

\newcommand{\corner}{\,\raisebox{.2ex}{\scalebox{1.15}{$\llcorner$}}\,}
\newcommand{\symmdiff}{\,\raisebox{.2ex}{\scalebox{.7}{$\triangle$}}\,}

\setlength{\marginparwidth}{1.2in}
\let\oldmarginpar\marginpar
\renewcommand\marginpar[1]{\-\oldmarginpar[\raggedleft\scriptsize #1]%
{\raggedright\scriptsize #1}}

\begin{document}

\title[On the structure of phase transition maps]{On the structure of phase transition maps for three or more coexisting phases}
\author{Nicholas D.\ Alikakos}
\address{Department of Mathematics\\ University of Athens\\ Panepistemiopolis\\ 15784 Athens\\ Greece \and Institute for Applied and Computational Mathematics\\ Foundation of Research and Technology -- Hellas\\ 71110 Heraklion\\ Crete\\ Greece}
\email{\href{mailto:nalikako@math.uoa.gr}{\texttt{nalikako@math.uoa.gr}}}
\thanks{The author was partially supported through the project PDEGE -- Partial Differential Equations Motivated by Geometric Evolution, co-financed by the European Union -- European Social Fund (ESF) and national resources, in the framework of the program Aristeia of the `Operational Program Education and Lifelong Learning' of the National Strategic Reference Framework (NSRF)}
\date{}
\maketitle

\section{Introduction}
\label{section1}
This paper is partly based on a lecture delivered by the author at the ERC workshop ``Geometric Partial Differential Equations'' held in Pisa in September 2012. What is presented in the following is an expanded version of that lecture.

Specifically, we consider the system
\begin{equation}\label{system}
\Delta u - W_u(u) = 0, \text{ for } u : \R^n \to \R^m,
\end{equation}
with $W \in C^2(\R^m; \R_+)$, $W \geq 0$, where $W_u := (\partial W / \partial u_1, \dots, \partial W / \partial u_n)^{\top}$, and occasionally with additional hypotheses introduced later on. We refer the reader to Part I in \cite{alikakos-fusco-pisa} for general information and motivation for system \eqref{system}.

The paper is organized as follows. In Section \ref{section2} we present the basics of the problem for general potentials, in Section \ref{section3} we study symmetric potentials for the phase transition model and establish the existence of equivariant connection maps, in Sections \ref{section4} and \ref{section5} we present and prove a related Bernstein-type theorem, and in Section \ref{section6} we discuss the hierarchical structure of the equivariant connection maps. Sections \ref{section2}, \ref{section3}, and \ref{section6} are restricted to statements of results with explanations but without proofs, referring to published papers or preprints for the details. In contrast, in Sections \ref{section4} and \ref{section5} we give the background and detailed proofs.

The author would like to acknowledge the warm hospitality of the Department of Mathematics of Stanford University in the spring semester of 2012, during which part of this paper was written. Special thanks are due to Rafe Mazzeo, George Papa\-nicolaou, Lenya Ryzhik, Rick Schoen, and Brian White. Section \ref{section4} is very much influenced from discussions with Rick Schoen and lectures and material provided by Brian White.

\section{The basics for general potentials}
\label{section2}
We recall some known facts for \eqref{system}. The system is the Euler--Lagrange equation for the \emph{free energy functional}
\[ J(u;\R^n) := \int_{\R^n} \left( \frac{1}{2} |\nabla u|^2 + W(u) \right) \dd x, \]
where $\nabla u = (\partial u_i / \partial u_j )$, for $i = 1, \dots, m$, $j = 1, \dots, n$, and $|\cdot|$ is the Euclidean norm of the matrix.

One important difference of \eqref{system} with its scalar counterpart (for $m=1$) is that in that case the structure of bounded entire solutions does not depend very much on $W$. In contrast, for the system there two distinguished examples with very distinct behavior: The class of \emph{phase transition potentials}, that is, $W$'s with a finite number of global minima (wells) $a_1, \dots, a_N$, with $W(a_i) = 0$, and the class of \emph{Ginzburg--Landau potentials}, for example, the potential $W(u) = \frac{1}{4} (|u|^2 - 1)^2$ (disconnected versus connected zero sets). In the phase transition case and under suitable rescaling the free energy concentrates on minimal hypersurfaces or Plateau complexes (see \cite{baldo}), while in the Ginzburg--Landau case it concentrates on higher-codimension objects known as vortices, and otherwise the solution converges to a harmonic map (see \cite{bethuel-brezis-helein}).

Equation \eqref{system} can be written as a divergence-free condition, that is,
\[ \dv T = (\nabla u)^\top (\Delta u - W_u(u)) = 0, \]
for the stress-energy tensor
\[
T_{ij} (u, \nabla u) := u_{,i} \cdot u_{,j} - \delta_{ij} \left( \frac{1}{2} |\nabla u|^2 + W(u) \right).\footnote{The sharp-interface limit $T_\varepsilon \to T_0 = \sigma (\nabla d \otimes \nabla d - \id)$ is the orthogonal projection to the tangent space of the interface $S$ separating the two phases, where $d = \dist (x,S)$ and $\sigma$ is the associated interface energy.}
\]
In this context it was introduced in \cite{alikakos-basic-facts}, but as it turns out it is a particularization of a general formalism well-known to the physicists \cite{landau-lifshitz}.

The divergence-free formulation has certain important consequences. For example, one can derive the monotonicity formula
\[
\frac{\dd}{\dd R} \left( \frac{1}{R^{n-2}} \int_{|x-x_0|<R} \left( \frac{1}{2} |\nabla u|^2 + W(u) \right) \dd x \right) \geq 0,
\]
from which Liouville-type theorems follow (see \cite{alikakos-basic-facts}). For instance,
\[ \begin{cases}
J(u; B_R) = o(R^{n-2}) \text{ as } R \to +\infty, \text{ for } n \geq 3, &\text{implies} \quad u \equiv \const, \smallskip\\
J(u; B_R) = o(\log R) \text{ as } R \to +\infty, \text{ for } n = 2, &\text{implies} \quad u \equiv \const. 
\end{cases} \]
In particular,
\begin{equation}\label{liouville}
\int_{\R^n} \left( \frac{1}{2} |\nabla u|^2 + W(u) \right) \dd x < +\infty \quad \text{implies} \quad u \equiv \const,
\end{equation}
for $n \geq 2$. Note that \eqref{liouville} is a source of difficulty for constructing solutions to \eqref{system} via the direct method. It was Farina \cite{farina} who first derived the monotonicity formula above and its implication \eqref{liouville} in the context of the Ginzburg--Landau system and Modica \cite{modica1} who derived \eqref{liouville} for $m=1$.

In the scalar ODE case ($n=1$, $m=1$), for solutions of \eqref{system} with limits at infinity, one has the elementary \emph{equipartition relation}
\[
\frac{1}{2} |u_x|^2 = W(u).
\]
In the scalar PDE case ($n \geq 2$, $m=1$), Modica \cite{modica1} established the estimate
\begin{equation}\label{modica-estimate}
\frac{1}{2} |\nabla u|^2 \leq W(u),
\end{equation}
(see also \cite{caffarelli-garofalo-segala}). The analog of estimate \eqref{modica-estimate} is false for systems in general. All the known counterexamples (see \cite[pp.~389--390]{farina}) involve Ginzburg--Landau potentials. One implication of \eqref{modica-estimate} would be the stronger monotonicity formula
\[
\frac{\dd}{\dd R} \left( \frac{1}{R^{n-1}} \int_{|x-x_0|<R} \left( \frac{1}{2} |\nabla u|^2 + W(u) \right) \dd x \right) \geq 0,
\]
already known for the scalar case (see \cite{modica2}).

Another implication of the divergence-free formulation is a Pohozaev-type identity (see \cite{alikakos-faliagas1})
\[
\frac{n-2}{2} \int_\Omega |\nabla u|^2 \dd x + n\int_\Omega W(u) \dd x + \frac{1}{2} \int_{\partial \Omega} (x - x_0) \cdot \nu\, |\nabla u|^2 \dd S = 0,
\]
where $\nu$ is the outward normal and $x_0 \in \Omega$ arbitrary, for solutions of the system
\[
\begin{cases}
\Delta u - W_u(u) = 0, &\text{in } \Omega \subset \R^n,\smallskip\\
u=a, &\text{on } \partial \Omega, \text{ with } W(a) = 0.
\end{cases}
\]

Gui \cite{gui} has developed certain identities for the system, which he calls `Hamiltonian', and points out their relationship with the classical Pohozaev identity (see, for example, \cite{evans}). His identities can also be derived via the stress-energy tensor. Here is a sample: Let $n=2$, $m=2$, and let $u$ be a solution to \eqref{system} satisfying the estimate
\[
|u(x_1, x_2) - a_\pm| \leq C \e^{-c |x_1|}, \text{ for all } x_2 \in [M,N],
\]
with $-\infty \leq M,N \leq +\infty$ and $W(a_\pm) = 0$. Then,
\[
\int_\R \left( \frac{1}{2} \left( |u_{x_1}(x_1,x_2)|^2 - |u_{x_2}(x_1,x_2)|^2 \right) - W(u(x_1,x_2)) \right) \dd x_1 = \const,
\]
for all $x_2 \in (M,N)$.

Having asymptotic information on the solution along certain hyperplanes as $|x| \to +\infty$ and also convergence to a minimum of $W$ away from them (see Section \ref{section4}), one can measure the flux of the stress-energy tensor over large spheres in order to derive balance conditions relating the angles between the hyperplanes, thus deriving rigidity-type results. For the phase transition case and for a triple-well potential, Gui \cite{gui} has derived such a result in the planar case $n=2$, $m=2$, thus relating the angles of a triple junction to the surface energies. This was extended to the three-dimensional case $n=3$, $m=3$, in \cite{alikakos-antonopoulos-damialis}. Related also is the work of Kowalczyk, Liu, and Pacard \cite{kowalczyk-liu-pacard}.

\section{Symmetric phase transition potentials -- Existence of equivariant connection maps}
\label{section3}
In this section we restrict ourselves to the phase transition case for potentials that respect the symmetries of a finite reflection group $G$ acting on $\R^n$ (see \cite{grove-benson}) and we look for equivariant solutions
\[
u(gx) = g u(x), \text{ for all } x \in \R^n \text{ and } g \in G.
\]

The first results in this direction are due to Bronsard, Gui, and Schatzman \cite{bronsard-gui-schatzman} for $n=2$, $m=2$, and $G$ the group of reflections of the equilateral triangle. Later, the work was extended by Gui and Schatzman \cite{gui-schatzman} to $n=3$, $m=3$, and $G$ the group of symmetries of the regular tetrahedron. These two special groups are particularly important as they are related to triple junctions on the plane and to quadruple junctions in three-dimensional space, which are minimal objects (cones) for the related sharp-interface problem. 

In work with Fusco \cite{alikakos-fusco-arma} we considered the general case of a reflection group and looked for an abstract result. Consider the following very general hypotheses.

\begin{hypothesis}[$N$ nondegenerate global minima]\label{hypothesis1}
The potential $W$ is of class $C^2$ and satisfies $W(a_i)=0$, for $i=1,\ldots,N$, and $W>0$ on $\R^m \setminus \{a_1,\dots a_N\}$. Furthermore, there holds $v^\top\partial^2 W(u)\, v \geq 2 c^2 |v|^2$, for $v\in\R^m$ and $i=1,\ldots, N$.
\end{hypothesis}

\begin{hypothesis}[Symmetry]\label{hypothesis2}
The potential $W$ is invariant under a finite reflection group $G$ acting on $\R^m$ (Coxeter group), that is,
\[
W(gu) = W(u), \text{ for all } g \in G \text{ and } u \in \R^m.
\]
Moreover, we assume that there exists $M>0$ such that $W(su) \geq W(u)$, for $s\geq 1$ and $|u|=M.$
\end{hypothesis}

\begin{hypothesis}[Location and number of global minima]\label{hypothesis3}
Let $F \subset \R^m$ be a fundamental region of $G$. We assume that the closure $\overline{F}$ contains a single global minimum of $W,$ say $a_1$, and let $G_{a_1}$ be the subgroup of $G$ that leaves $a_1$ fixed.
\end{hypothesis}
We set
\[
D = \interior \{ \cup g \overline{F} \mid g \in G_{a_1} \},
\]
and notice that by the invariance of $W$ it follows that the number of minima of $W$ is
\[
N= \frac{|G|}{|G_{a_1}|},
\]
where here $| \cdot |$ is the order of the group.

We recall from \cite{alikakos-fusco-arma} several examples. For $G=\mathcal{H}^{3}_{2}$, the group of symmetries of the equilateral triangle on the plane, we can take as $F$ the $\frac{\pi}{3}$ sector. If $a_1 \in F$, then $N=6$, while if $a_1$ is on the walls, then $N=3$. In higher dimensions we have more options since we can place $a_1$ in the interior of $\overline{F}$, in the interior of a face, on an edge, and so on. For example, if $G=\mathcal{W}^*$, the group of symmetries of the cube in three-dimensional space, then $|G|=48$. If the cube is situated with its center at the origin and its vertices at the eight points $(\pm 1, \pm 1, \pm 1)$, then we can take as $F$ the simplex generated by $s_1 = e_1 + e_2 + e_3$, $s_2 = e_2 + e_3$, and $s_3 = e_3$, where the $e_i$'s are the standard basis vectors. We have then the following options:
\begin{enumerate}
\item At the origin, $N=1$.
\item On the edge $s_3$, $N=6$.
\item On the edge $s_1$, $N=8$.
\item On the edge $s_2$, $N=12$.
\item In the interior of a face, $N=24$.
\item In the interior of the fundamental region, $N=48$.
\end{enumerate}

We have the following theorem.
\begin{theorem}[\cite{alikakos-fusco-arma, alikakos-cpde, fusco}]\label{theorem1}
Under Hypotheses \ref{hypothesis1}--\ref{hypothesis3}, there exists a classical entire equivariant solution $u: \R^n \to \R^m$ to system \eqref{system} such that
\begin{enumerate}
\item $|u(x)-a_1| \leq K \mathrm{e}^{-k \dist (x,\partial D)}$, for $x \in D$ and for positive constants $k$, $K,$ \medskip
\item $u(\overline{F}) \subset \overline{F}$ and $u(D) \subset D$ (positivity).\footnote{Smyrnelis has established that $u(F) \subset F$ for certain groups (personal communication).}
\end{enumerate}
As a consequence of {\rm (i)}, the solution $u$ \emph{connects} the $N=|G|/|G_{a_1}|$ global minima of $W$ in the sense that
\[ \lim_{\lambda \to +\infty} u(\lambda g \eta) = g a_1, \text{ for all } g \in G,\]
uniformly for $\eta$ in compact subsets of $D\cap\mathbb{S}^{n-1}.$
\end{theorem}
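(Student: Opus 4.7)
The plan is to construct $u$ as the limit of minimizers $u_R$ of the free energy
\[
J_R(v) = \int_{B_R} \left( \tfrac{1}{2} |\nabla v|^2 + W(v) \right) \dd x
\]
over a constrained equivariant class, then pass to the limit $R\to\infty$. Specifically, I would work with
\[
\mathcal{A}_R = \{ v \in W^{1,2}(B_R;\R^m) : v\text{ is $G$-equivariant}, \ v(\overline{F}\cap B_R)\subset \overline{F}, \ |v|\leq M \}.
\]
The truncation $|v|\leq M$ is admissible because, by Hypothesis \ref{hypothesis2}, radially projecting onto the ball $\{|u|\leq M\}$ never increases $J_R$. Equivariance and the fundamental region constraint effectively reduce the problem to $F\cap B_R$ with natural boundary data on the walls (values lying on those walls), and on $\partial B_R$ I would impose an equivariant Dirichlet datum equal to $a_1$ on $D\cap \partial B_R$. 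Existence of a minimizer $u_R\in\mathcal{A}_R$ is then standard: lower semicontinuity of $J_R$, weak closedness of the equivariance condition, and the fact that $\overline{F}$ is a convex (simplicial) cone (a basic property of reflection groups) make $\mathcal{A}_R$ weakly sequentially closed.

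Next I would establish property (ii), namely that the fundamental-region constraint is not active in the interior and $u_R$ solves \eqref{system} classically in $D$. The argument is a reflection/folding trick: if $u_R$ were to leave $\overline{F}$, replacing the offending part by its reflection into $\overline{F}$ preserves the Dirichlet energy (reflections are isometries) and, by the $G$-invariance of $W$, preserves the potential term, while relaxing the constraint properly. Combining this with the equivariance yields $u_R(\overline{F})\subset\overline{F}$ and $u_R(D)\subset D$, and then standard elliptic regularity shows $u_R$ is $C^{2,\alpha}$ and solves \eqref{system} in the interior of its domain.

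The technical heart of the proof is the uniform exponential estimate
\[
|u_R(x) - a_1| \leq K \e^{-k\, \dist(x,\partial D)}, \qquad x\in D\cap B_{R/2},
\]
with $K,k$ independent of $R$. The non-degeneracy assumption $v^\top \partial^2 W(a_1) v \geq 2c^2|v|^2$ of Hypothesis \ref{hypothesis1} gives $W(u)\geq c^2 |u-a_1|^2$ in a neighborhood of $a_1$, so once $u_R$ is close enough to $a_1$ at some interior point, a comparison with the radial supersolution to $\Delta\varphi = c^2 \varphi$ forces the exponential decay along rays moving into $D$. The delicate step—and the step I expect to be the main obstacle—is to show that $u_R$ is actually close to $a_1$ somewhere deep inside $D$, uniformly in $R$. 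This is proved by contradiction using an energy-density argument: if $u_R$ stayed far from $a_1$ on a set of definite size, the minimum energy would exceed the energy of an explicit competitor built by interpolating between the equivariant boundary data and an equivariant extension equal to $a_1$ on most of $D$, contradicting minimality. Hypothesis \ref{hypothesis3} (that $a_1$ lies in $\overline{F}$ with stabilizer $G_{a_1}$, so $D$ is a union of fundamental regions meeting along walls fixed by $G_{a_1}$) is what makes such a competitor admissible.

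Finally, with $|u_R|\leq M$ and the uniform exponential bound in hand, interior Schauder estimates applied to \eqref{system} give uniform $C^{2,\alpha}_{\mathrm{loc}}(\R^n)$ bounds on $u_R$. A diagonal subsequence $u_{R_j}\to u$ converges locally uniformly to a classical entire solution of \eqref{system} that inherits equivariance, the positivity property (ii), and the exponential estimate (i). The connection property then follows directly from (i): for $g\in G$ and $\eta\in D\cap \mathbb{S}^{n-1}$, one has $\dist(\lambda \eta,\partial D)\to\infty$ as $\lambda\to+\infty$, uniformly on compact subsets of $D\cap\mathbb{S}^{n-1}$, so $u(\lambda\eta)\to a_1$ by (i), and then $u(\lambda g\eta)=g u(\lambda\eta)\to g a_1$ by equivariance.
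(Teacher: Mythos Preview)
The paper does not contain a proof of Theorem~\ref{theorem1}. As stated in the introduction, Section~\ref{section3} is ``restricted to statements of results with explanations but without proofs, referring to published papers or preprints for the details,'' and the theorem is attributed to \cite{alikakos-fusco-arma, alikakos-cpde, fusco}. So there is no in-paper argument to compare your proposal against. The paper does record, however, that the solution is ``constructed variationally'' and enjoys the global-minimizer property \eqref{global-minimizer} in the equivariant positive class, which is consistent with your outline of minimizing $J_R$ on balls over such a class and passing to the limit.

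Your sketch matches the scheme of the cited references in broad strokes, but two points deserve flagging. First, the positivity (ii) is more delicate than your ``reflection/folding trick'' suggests: the paper explicitly notes that \cite{alikakos-fusco-arma} needed an \emph{additional hypothesis}, later removed in \cite{fusco}, and remarks that ``the positivity of $u$ is not an implication of the minimizing property \eqref{global-minimizer} for arbitrary $G$, without extra hypotheses on $W$.'' So the step where you claim the constraint $v(\overline F)\subset\overline F$ is automatically inactive for minimizers is precisely where the real work lies. Second, your boundary datum ``equal to $a_1$ on $D\cap\partial B_R$'' does not extend equivariantly to a continuous map on $\partial B_R$ (the values $g a_1$ jump across the walls of $D$); in the references this is handled either by working with suitable $W^{1,2}$ data or with a smoothed equivariant profile, and you should say which route you take. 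Apart from these caveats, the overall strategy---uniform $L^\infty$ bound from Hypothesis~\ref{hypothesis2}, energy comparison to force proximity to $a_1$ deep in $D$, linear exponential decay from the nondegeneracy in Hypothesis~\ref{hypothesis1}, and a diagonal limit---is the correct one.
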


\begin{remark}
We need a clarification concerning the dimensions $n,m$ and the group $G$ that is acting by Hypothesis \ref{hypothesis2} on the target. If $n \geq m$, then the group $G$ can be embedded in the domain space via a natural homomorphism. For example, consider $n=3$, $m=2$, and $G$ the group of symmetries of the equilateral triangle. On the other hand, if $n<m$, the existence of such a homomorphism is more problematic and in general there is no such embedding. For example, consider $n=2$, $m=3$, and take as $G$ the group associated to the tetrahedron. For relevant information we refer to Bates, Fusco, and Smyrnelis \cite{bates-fusco-smyrnelis}. Our notation $u(\overline{F})$ and $u(\lambda g \eta)$ tacitly assumes the homomorphism in the case $n \neq m$.
\end{remark}

The above theorem was proved in Alikakos and Fusco \cite{alikakos-fusco-arma} under an additional hypothesis. Subsequently, the author gave a simplified proof in \cite{alikakos-cpde} and, finally, in Fusco \cite{fusco} the extra hypothesis was removed and the theorem was proved under the hypotheses above.

As it was mentioned in \eqref{liouville}, there holds
\[
J(u; \R^n) = +\infty, \text{ where } J(u, \Omega) = \int_{\Omega} \left( \frac{1}{2} |\nabla u|^2 + W(u) \right) \dd x,
\]
for the solution constructed above. However, the solutions are constructed variationally and possess the following minimization property (see \cite{alikakos-fusco-hierarchy}), which defines the notion of a \emph{local minimizer} (cf.\ \cite{alberti-ambrosio-cabre}), that is,
\begin{equation}\label{global-minimizer}
J(u; \Omega) = \min J(v; \Omega), \text{ such that } v=u \text{ on } \partial \Omega,
\end{equation}
over all domains $\Omega$ that are bounded, smooth, and open, but not necessarily symmetric, and over all equivariant positive maps $v$ (cf.\ (ii) in the statement of Theorem \ref{theorem1}) in $W^{1,2} (\Omega; \R^m)$.

\begin{remarks}
The constructed solution $u$ is a minimizer in the equivariant positive class. For the equilateral triangle group on the plane and the regular tetrahedron group in three-dimensional space, one would expect that the solution constructed is a minimizer in the class of all $W^{1,2} (\Omega; \R^n)$-maps.

It appears that the positivity of $u$ is not an implication of the minimizing property \eqref{global-minimizer} for arbitrary $G$, without extra hypotheses on $W$.

Looking for equivariant solutions is of course a convenience. However, at this point all the existence results for system \eqref{system} known to the author involve hypotheses of symmetry (except for $n=1$).
\end{remarks}

\section{A related Bernstein-type theorem -- Background}
\label{section4}
We recall De Giorgi's conjecture \cite{de-giorgi} for the scalar equation, for $n \geq 2$, $m=1$.
\begin{conjecture}[De Giorgi]
\label{de-giorgi-conjecture}
For the equation 
\[ \Delta u - W'(u)=0,\] 
with $W(u)=\frac{1}{4}(u^2-1)^2$, under the hypotheses that $u: \R^n \to \R$ is in $C^2(\R^n;[-1,1])$ with ${\partial u} / {\partial x_n} > 0$, is it true that the level sets of $u$ are hyperplanes, at least for $n \leq 8$?
\end{conjecture}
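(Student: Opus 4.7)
The plan is to prove that $u$ is one-dimensional, that is, $u(x) = g(x \cdot \nu)$ for some unit vector $\nu$ and profile $g:\R \to [-1,1]$; this is equivalent to the conclusion that every level set $\{u=c\}$ is a hyperplane. Writing $v := \partial u/\partial x_n > 0$, it suffices to show that the ratios $\varphi_i := (\partial u/\partial x_i)/v$, $i = 1,\ldots,n-1$, are constant on $\R^n$. Differentiating $\Delta u - W'(u) = 0$, each $\partial_i u$ solves the linearized equation $\Delta w - W''(u) w = 0$, and a direct computation turns this into the weighted divergence-form identity
\[
\dv(v^2 \nabla \varphi_i) = 0 \quad \text{on } \R^n.
\]

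This is the setting for a Liouville-type theorem of Berestycki--Caffarelli--Nirenberg, Ghoussoub--Gui, and Ambrosio--Cabr\'e: if $\varphi \in W^{1,2}_{\mathrm{loc}}(\R^n)$ solves such an equation and satisfies the growth bound $\int_{B_R} v^2 \varphi^2 \dd x \leq C R^2$ for all large $R$, then $\varphi$ is constant. Since $v^2 \varphi_i^2 = (\partial_i u)^2 \leq |\nabla u|^2$, the conjecture reduces to the quadratic-in-$R$ energy bound $J(u;B_R) \leq C R^2$. For this, the monotonicity $\partial_n u > 0$ allows one to invoke the Alberti--Ambrosio--Cabr\'e theorem that $u$ is a global minimizer of the Allen--Cahn energy, and global minimizers enjoy the density estimate $J(u;B_R) \leq C R^{n-1}$. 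This yields the desired $R^2$ bound exactly when $n \leq 3$, settling those dimensions.

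For $4 \leq n \leq 8$ the density estimate is insufficient, and one must change strategies, following Savin. The plan is to rescale to $u_\varepsilon(x):=u(x/\varepsilon)$, use $\Gamma$-convergence of the Allen--Cahn energy to the perimeter functional (the sharp-interface limit indicated in Section~\ref{section2}) to show that the zero level sets of $u_\varepsilon$ converge in the Hausdorff distance to the boundary of a minimizing Caccioppoli set, apply Simons' theorem that a globally minimizing boundary in $\R^n$ is a hyperplane for $n \leq 8$, and finally upgrade $C^0$ flatness of the level sets of $u$ to planarity via an improvement-of-flatness iteration at the level of the PDE.

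The main obstacle in the higher-dimensional regime is this last step. The $\Gamma$-convergence argument only identifies the limit as a \emph{minimizing} boundary once the one-sided asymptotics $u(x',x_n) \to \pm 1$ as $x_n \to \pm\infty$ are known, and guaranteeing these is itself nontrivial; Savin assumes them as an additional hypothesis for $n \geq 4$. Moreover, the improvement-of-flatness step is a nonlinear, PDE-level analogue of De Giorgi's $\varepsilon$-regularity theory for minimal surfaces, and its implementation requires delicate linearized comparison on tilted cylinders. For $n \geq 9$, by analogy with the Simons cone counterexample to the Bernstein problem, the conjecture is expected to fail, and a counterexample has indeed been constructed by del Pino, Kowalczyk, and Wei.
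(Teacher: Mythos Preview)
The paper does not contain a proof of this statement. It is explicitly labeled and stated as a \emph{conjecture} (De Giorgi's conjecture), and the paper's treatment of it is purely historical: immediately after stating it, the paper records that it was established by Ghoussoub and Gui for $n=2$, by Ambrosio and Cabr\'e for $n=3$, by Savin in the restricted form for $4\le n\le 8$, and disproved for $n\ge 9$ by del Pino, Kowalczyk, and Wei. No argument is given in the paper itself; the conjecture serves only as motivation for the Bernstein-type results for systems in Sections~\ref{section4}--\ref{section5}. So there is no ``paper's own proof'' against which to compare your proposal.

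That said, your sketch is a faithful summary of the actual literature proofs. The reduction to $\dv(v^2\nabla\varphi_i)=0$ and the Liouville theorem requiring $\int_{B_R} v^2\varphi_i^2 \le CR^2$ is precisely the Ghoussoub--Gui / Ambrosio--Cabr\'e mechanism, and you correctly identify that the energy bound $J(u;B_R)\le CR^{n-1}$ closes the argument only for $n\le 3$. Your description of Savin's strategy for $4\le n\le 8$ (blow-down, $\Gamma$-convergence to a minimizing boundary, Simons' theorem, improvement of flatness) is accurate, as is your observation that this route requires the additional asymptotic hypothesis $\lim_{x_n\to\pm\infty}u=\pm 1$ --- which is exactly why the paper calls Savin's result the ``restricted form'' of the conjecture. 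One minor correction: the global-minimizer property under the monotonicity hypothesis alone (without the limits) is more delicate than you suggest; Alberti--Ambrosio--Cabr\'e use the limits to obtain it, so even the $n=3$ energy bound as you've stated it leans on more than pure monotonicity.
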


A more restricted version of the conjecture involves the additional hypothesis $\lim_{x_n \to \pm \infty} u(x) = \pm 1$.

This conjecture was established by Ghoussoub and Gui \cite{ghoussoub-gui} for $n=2$, Ambrosio and Cabr\'e \cite{ambrosio-cabre} for $n=3$, and Savin \cite{savin} in the restricted form for $4 \leq n \leq 8$. Finally, it was disproved for $n \geq 9$ by del Pino, Kowalczyk, and Wei \cite{delpino-kowalczyk-wei,delpino-kowalczyk-wei-pnas}. We refer to the survey paper by Farina and Valdinoci \cite{farina-valdinoci}, where in addition several extensions to a variety of related equations are given. 

Some of the ingredients behind the formulation of this conjecture are
\begin{enumerate}
\item the Bernstein theorem for graphs,
\item the relationship between monotonicity and stability,
\item the solution of the ODE (the \emph{heteroclinic connection})
\[ \frac{\dd^2 U}{\dd \eta^2} - W'(U) = 0, \text{ with } \lim_{\eta \to \pm \infty} U(\eta)=\pm 1\]
(unique up to translations),
\item the phase transition problem for two phases.
\end{enumerate}

The conclusion in the conjecture is equivalent to showing that 
\[ u(x) = U \left( \frac{a \cdot x - c}{\sqrt{2}} \right),\] 
for some $a \in \R^n$, with $|a|=1$, and $c \in \R$, that is, $u(x) = U(Px)$, where $P$ is the orthogonal projection to the normal direction of the level sets.

In formulating the analog of the conjecture for systems one should keep in mind that
\begin{enumerate}
\item Tangent planes are special cases of tangent cones. Moreover, minimizing tangent cones have cylindrical structure, that is, 
\[ C = V \times \tilde{C}, \text{ with } \tilde{C} \text{ minimizing in } V^{\bot},\]
and the cone $C$ is translation invariant `along $V$' ($V=\{ 0 \}$ is an option).
Also, the Liouville object in this context is the cone at infinity. The Bernstein-type theorem therefore should involve a cone.
\item Monotonicity is not related in general to stability for systems.
\item The solution $u: \R^{n} \to \R^{n-k}$ \emph{a posteriori} should be of the form
\[ u(x) = \hat{u} (Px), \]
where $P$ is an orthonormal projection on an $(n-k)$-dimensional plane, $\hat{u} : \R^{n-k} \to \R^{n-k}$ is a \emph{connection} map, equivariant as in Theorem \ref{theorem1} in Section \ref{section3}.
\item For three or more phases the order parameter should be a vector since otherwise there is no connection between the extreme phases. For example, for coexistence of three phases we need at least a two-dimensional order parameter, thus a partitioning of $\R^2$ in three parts.
\item For the analog of the restricted conjecture we refer to Section \ref{section5} in the present paper.
\end{enumerate}

Our purpose in this section is to present a sample of such Bernstein-type theorems for the simplest nontrivial case, the triple junction in $\R^3$, that corresponds to one of the two singular minimizing cones in $\R^3$ (see \cite{taylor}). The formulation of such theorems in terms of cones goes back to Fleming \cite{fleming} and is subsequently developed in Morgan \cite{morgan}. However, here we also want to emphasize partitions as the natural setup. For this reason we present in detail White's approach \cite{white} because it improves Almgren's \cite{almgren} and because of its simplicity, and also for making our treatment as self-contained as possible. Our presentation here is also based on Chan's thesis \cite{chan}, written under White's supervision.

Finally, we mention the paper of Fazly and Ghoussoub \cite{fazly-ghoussoub}, which extends the methods of the scalar equation to systems, as far as this can possibly be done, by assuming certain monotonicities on the components of the solution which amount to 
\[ \frac{\partial^2 W(u)}{\partial u_i \partial u_j} \leq 0 , \text{ for } i \neq j. \]
For a special system of two equations, we mention the papers of Berestycki, Lin, Wei, and Zhao \cite{berestycki-lin-wei-zhao}, Berestycki, Terracini, Wang, and Wei \cite{berestycki-terracini-wang-wei}, Farina \cite{farina2}, and Farina and Soave \cite{farina-soave}.

We recall here some basic background on partitions and geometric measure theory (see \cite{white}, \cite{simon1}, \cite{white2}).

\subsection{Minimizing partitions}
Consider an open set $U \subset \R^n$ occupied by $N$ immiscible fluids, or phases. Associated to each pair of phases $i$ and $j$ there is a surface energy density $e_{ij}$, with $e_{ij}>0$ for $i \neq j$, and $e_{ij}=e_{ji}$, with $e_{ii}=0$. Hence, if $A_i$ denotes the subset of $U$ occupied by phase $i$, then $U$ is the disjoint union
\[ U = A_1 \cup A_2 \cup A_2 \cup \cdots \cup A_N, \]
and the energy of the partition $A=\{ A_i \}^N_{i=1}$ is
\[ E(A)=\sum_{0<i<j \leq N} e_{ij}\, \M (\partial A_i \cap \partial A_j), \]
where $\M$ (for \emph{mass}) stands for the measure of the interface. For $n=3$ it will simply be the area of $\partial A_i \cap \partial A_j$.

If $U$ is unbounded, for example $U=\R^n$ (we say then that $A$ is \emph{complete}), the quantity above in general will be infinity. Thus, for each $W$ open, with $W \subset \subset U$, we consider the energy
\[
E(A;W)=\sum_{0<i<j \leq N} e_{ij}\, \M (I_{ij} \cap W), \text{ where } I_{ij}:=\partial A_i \cap \partial A_j.
\]

\begin{definition}
The partition $A$ is a \emph{minimizing} $N$-partition if given any $W \subset \subset U$ and any $N$-partition $A'$ of $U$ with
\begin{equation}\label{partition-min}
\textstyle{\bigcup\limits_{i=1}^{N}} (A_i \symmdiff A'_i) \subset \subset W,
\end{equation}
we have
\[
E(A;W) \leq E(A'; W).
\]
\end{definition}

The symmetric difference $A_i \symmdiff A'_i$ of the sets $A_i$ and $A'_i$ is defined as their union minus their intersection, that is, $A_i \symmdiff A'_i = (A_i \cup A'_i) \setminus (A_i \cap A'_i)$.

\subsection{Flat chains with coefficients in a group}
Let $G$ be an abelian group with norm $|\cdot|$, such that $|g| \geq 0$, with $|g|=0$ if and only if $g=0$, for all $g \in G$, and
\[ |g+h| \leq |g| + |h|, \text{ for all } g, h \in G.\]
Then, $(G, |\cdot|)$ is a metric space and we will assume that it is complete and separable. In our case $G$ will be a finite group.

Fix $\R^n$ and a compact convex set $\K$ in $\R^n$. For each integer $k \geq 0$ consider the abelian group of all formal finite sums of the form $\sum g_i P_i$, where $g_i \in G$ and where $P_i$ is a $k$-dimensional oriented compact convex polyhedron in $\K$. We form the quotient group obtained by identifying $gP$ with $-g \tilde P$, whenever $P$ and $\tilde P$ coincide but have opposite orientations. Also, identify $gP$ and $g P_1 + g P_2$, whenever $P$ can be subdivided into $P_1$ and $P_2$.

The resulting abelian group $\mathcal{P}_k (\K; G)$ is called the group of \emph{polyhedral $k$-chains} on $\K$ with coefficients in $G$. Define the \emph{boundary homomorphism} $\partial : \mathcal{P}_k \to \mathcal{P}_{k-1}$ by
\[ \partial \left( \sum g_i P_i \right) := \sum g_i \partial P_i .\]
Note that any polyhedral $k$-chain $T$ can be written as a linear combination $\sum_i g_i [P_i]$ of nonoverlapping polyhedra, that is, polyhedra with disjoint interiors. Then, the \emph{flat norm} of the chain is defined to be
\[
W(T)= \inf_Q \{ \M(T-\partial Q)+\M(Q) \},
\]
where the infinimum is over all polyhedral $(k+1)$-chains $Q$.

The flat norm makes $\mathcal{P}_k(\K; G)$ into a metric space. The completion of this metric space is denoted by $\mathcal{F}_k(\K; G)$ and its elements are called \emph{flat $k$-chains} in $\K$ with coefficients in $G$. By uniform continuity, functionals such as the flat norm and operations such as addition and boundary extend in a unique way from polyhedral chains to flat chains. The mass norm in $\mathcal{P}_k(\K; G)$ extends to a linear semicontinuous functional in $\mathcal{F}_k(\K;G)$.

Suppose that every bounded closed subset of $G$ is compact. A fundamental compactness theorem for flat chains asserts that, given any sequence $T_i \in \mathcal{F}_k(\K; G)$ with $\M(T_i)$ and $\M(\partial T_i)$ uniformly bounded, there is a $W$-convergent subsequence. More generally, one can define the flat chains in $\R^n$ with compact support, $\mathcal{F}_k(\R^n;G)$, meaning that each element vanishes outside a certain compact convex set. Then, the compactness theorem holds for a sequence $T_i \in \mathcal{F}_k(\R^n; G)$, with $\supp T_i \subset \K$, for $\K$ independent of $i$, such that $T_i\rightharpoonup T$, where the symbol `$\rightharpoonup$' denotes convergence in the flat norm.

\subsection{Flat chains of top dimension}[See \cite{white}.]
\label{flat-chains-of-top-dimension}
Polyhedral $n$-chains in $\R^n$ with compact support can be identified with the set of piecewise-constant functions
\[ g: \R^n \to G ,\]
that vanish outside a compact convex set $\K$. Here, two functions that differ only on a set of measure zero are regarded as the same. `Piecewise constant' means locally constant except along a finite collection of hyperplanes. The identification is as follows. Any such $ T \in \mathcal{F}_k(\R^n;G)$ can be written as
\[ T=\sum g_i [P_i], \]
where the $P_i$'s are nonoverlapping and inherit their orientations from $\R^n$. We can associate to $T$ the function
\[ g: \R^n \to G, \text{ with } g(x)=
\begin{cases}
g_i, &\text{if $x$ is in the interior of $P_i$ },\smallskip\\
0, &\text{if $x$ is not in the interior of $P_i$. }
\end{cases} \]
Note that the mass norm of $T$ is equal to the $L^1$ norm of $g(\cdot)$. Also, since there are no nonzero $(n+1)$-chains in $\R^n$, we see from the definition of $W$ that
\[ W(T)=\M(T)=\int_{\R^n}|g(x)| \dd x. \]
Consequently, the $W$-completion of the polyhedral chains (that is, the flat $n$-chains) is isomorphic to the $L^1$-completion of the piecewise-constant functions.

Denoting $T$ by $[\K]_{Lg}$, the isomorphism is
\[ L^1(\K;G) \ni g \to [\K]_{Lg} \in \mathcal{F}_n(\K;G), \]
with
\[ \M([\K]_{Lg})=W([\K]_{Lg})=\int_{\R^n}|g| \dd x.\]
Thus, the flat $n$-chains $T$ on $\R^n$ with compact support can be identified with the
$ L^1_{\rm loc}(\R^n;G)$ functions. The flat chains with $\M(\partial T)<+\infty$ correspond to the sets with finite perimeter (Caccioppoli sets). The $BV$ norm of the function $g$ above gives the perimeter, that is,
\[ \left\| g \right\|_{BV}=\M(\partial T). \]
The compactness for flat chains with
\[ \M(T_n)+\M(\partial T_n)<C \]
is equivalent \emph{in this setup} to the compactness of the embedding
\[ BV(\Omega) \subset \subset L^1(\Omega), \text{ for $\Omega$ bounded.} \]
The lower semicontinuity of $\M(\partial T)$ with respect to the $W$-norm is equivalent to the lower semicontinuity of the $BV$ norm with respect to $L^1$.

\subsection{The group of surface tension coefficients}[See \cite{white}.]
\label{the-group-of-surface-tension-coefficients}
The purpose next is the introduction of an appropriate group $G$ so that for the flat chain $T=\sum g_i P_i$, where $P_i=A_i$, with $A=\{ A_i \}$ a partition of $U$, there holds
\begin{equation}\label{mass-t-w}
\M(\partial T \corner W)=E(A;W).
\end{equation}
First, assume that
\begin{equation}\label{inequality-e_ij}
e_{ik}\leq e_{ij}+e_{jk}, \text{ for all } i,j,k.
\end{equation}
Let $G$ be the free $\Z_2$-module with $N$ generators $f_1,\ldots,f_N$ (one for each phase). White \cite{white} defines a norm in this group such that
\[ |f_i-f_j|=e_{ij}, \]
and the $\Z_2$-module identifies
\[ f_{i_1}-f_{j_1}=f_{i_1}+f_{j_1}. \]
Utilizing this, it is easy to see in calculating $\partial T$, and $\M(\partial T)$, that \eqref{mass-t-w} holds. In this setup, given a partition of $U$ into $N$ measurable sets $A_1,\ldots,A_N$, and $\K$ as above, we associate the flat $n$-chain
\[ T=\K_{Lg}, \]
where
\[ g(x)=
\begin{cases}
f_i, &\text{for } x \in A_i \cap \K \smallskip \\
0, &\text{for } x \notin A_i \cap \K.
\end{cases}
\]
Note that if the $A_i$'s have piecewise-smooth boundaries, then \eqref{mass-t-w} holds. More generally, equation \eqref{mass-t-w} holds whenever the $A_i$'s are Caccioppoli sets, that is, whenever the flat chains have finite mass.

Conversely, given any flat $n$-chain $T$, we can represent $T$ as
\[ T=\K_{Lg}, \]
where $g \in L^1(U \cap \K; G)$. In this article we take $U=\R^n$. We note that in \cite{white} it is shown that the inequalities \eqref{inequality-e_ij} are no real restriction, in the sense that if they are violated, then one can define new coefficients $e^{*}_{ij}$ out of the old, so that the infimum of $E$ coincides with the infimum of $E^{*}$ (defined by replacing $e_{ij}$ with $e^{*}_{ij}$). Also, it is noted that \eqref{inequality-e_ij} is necessary for $E$ to be lower semicontinuous with respect to the flat norm. Here, we refer also to \S 4.1 in \cite{alberti}.

\subsection{Basics on minimizing chains}
\label{basics-on-minimizing-chains}
We recall some standard facts on minimizing chains and later we point out the relationship with minimizing partitions.

\subsubsection*{Cones}
If $x_0 \in \R^n$, where $S$ is a $k$-dimensional flat chain in $\R^n$, then \emph{the cone over $S$ with vertex at $x_0$} is the flat chain
\begin{equation}\label{definition-cone}
 x_0 S = \mathrm{Cone}(S) = h(I \times S),
\end{equation}
where $h(t,x)=(1-t)x_0+tx$, for $0 \leq t \leq 1$, and $x \in S$. We have
\[ S = \partial(x_0 S) + x_0 \partial S, \]
and if $S \subset B_r(x_0)$, where $B_r(x_0)$ is the ball with radius $r$ and center at $x_0$, then
\[ \M( x_0\, S) \leq \frac{r}{k+1}\M(S). \]

$C_x$ is a cone with vertex at $x$ if, by definition, it is invariant as a set under the homothetic map
\[ y \to x+t(y-x), \text{ for all } t>0 \text{ and } y \in C_x. \]
If $S$ is a $k$-flat chain in $\R^n$, then $S$ is \emph{mass minimizing} if $\M(S) \leq \M(S')$, for all $S'$ with $\partial S' = \partial S$.

If $\Gamma$ is a \emph{cycle}, that is, $\partial \Gamma =0$, then
\[ L(\Gamma) := \inf \{ \M(X) \mid \partial X=\Gamma \}. \]
Consequently, if $S$ is mass minimizing, then $\M(S)=L(\partial S)$.

The flat chain $S$ is \emph{minimizing} if by definition
\[ \M(S \corner B_r(x)) = L(\partial (S \corner B_r(x)), \]
for all $r>0$ and center $x$ such that $0<r<\dist(x, \supp\partial S)$. Mass minimizing is minimizing. We allow the options $\partial S=0$ and $\M(S)=\infty$.

The \emph{monotonicity formula} holds for $k$-dimensional minimizing flat chains and states that
\begin{equation}\label{monotonicity-cones}
\Theta(S,x,r) := \frac{\M(S \corner B_r(x))}{\omega_k r^k}
\end{equation}
is an increasing function of $r$, where $\omega_k$ is the volume of the $k$-dimensional unit ball. It follows that for minimizing flat chains $S$, the limit
\[ \Theta(S,x) := \lim_{r \to 0} \Theta(S,x,r) \]
exists, and if
\begin{equation}\label{bounded-theta}
\Theta(S,x,r) < B, \text{ for } B \text{ independent of } x, r,
\end{equation}
then the limit
\[ \Theta(S) := \lim_{r \to +\infty} \Theta(S,x,r) \]
exists and is independent of $x$. We note that the condition $\Theta(S,x,r)=\const$ in $r>0$, for $x$ fixed, implies that $S$ is a cone with vertex at $x$.

\subsubsection*{The tangent cone (blow-up)}
Let $S$ be a minimizing flat chain, $x \notin \supp \partial S$, and let $\{ \mu_i \}$ be an increasing sequence of positive numbers, with $\mu_i \to +\infty$. Set
\[ S_i=\mathcal{D}_{\mu_i}(S-x), \text{ with } \mathcal{D}_{\mu_i}(S-x)=\{ \mu_i (y-x) \mid y \in S \}. \]
Then \emph{along a subsequence} there holds $S_i \rightharpoonup C_x$ (by the compactness theorem), where $C_x$ has the properties
\begin{enumerate}
\item $\partial C_x=0$,
\item $C_x$ is a cone,
\item $\Theta (C,0,r)=\Theta (S,x)$, for all $r>0$,
\item $C_x$ is minimizing.
\end{enumerate}

\subsubsection*{The cone at infinity (blow-down)}
If instead in the arrangement above $\{ \mu_i \}$ is a decreasing sequence, with $\mu_i \to 0$, and if \eqref{bounded-theta} holds, then along a subsequence there holds $S_i \rightharpoonup C_{\infty}$ (by the compactness theorem), where $C_{\infty}$ has the properties
\begin{enumerate}
\item $\partial C_{\infty}=0$,
\item $C_{\infty}$ is a cone,
\item $\Theta (C_{\infty},0,r)=\Theta (C_{\infty},0)=:\Theta(S)$,
\item $C_{\infty}$ is minimizing.
\end{enumerate}
Note that $C_x$ and $C_{\infty}$ are not necessarily unique.

If $N^{k-1}$ is a smooth $(k-1)$-surface, with $k \leq n-1$ and $N^{k-1} \subset \mathbb{S}^{n-1}$ (the unit sphere in $\R^n$), then the \emph{cone over $N^{k-1}$} is
\[ C(N^{k-1}) = \left\{ x \in \R^n ~\bigg|~ \frac{x}{|x|} \in N^{k-1} \right\}. \]
If $S$ is smooth, then the projection
\[ \frac{1}{R}(S \cap \mathbb{S}^{n-1}_R), \]
of the set $S \cap \mathbb{S}^{n-1}_R$ on the unit sphere tends to $C_{\infty}$, as $R \to \infty$, provided that \eqref{bounded-theta} holds, that is,
\[ C_{\infty} = \lim_{R \to +\infty} C \left( \frac{1}{R} \left( S \cap \mathbb{S}^{n-1}_R \right) \right), \]
where the limit is in the flat norm, and exists along a sequence
\[ R_1<R_2< \cdots \to +\infty,\]
where $C_{\infty}$ is the cone at infinity.

\subsubsection*{Relationship with partitions}[See \cite{almgren}, \cite{simon2}, \cite{chan}.]
The concepts in Paragraph \ref{basics-on-minimizing-chains} have exact analogs for partitions defined as flat chains of top dimension in Paragraphs \ref{flat-chains-of-top-dimension} and \ref{the-group-of-surface-tension-coefficients} above. Specifically,
\begin{enumerate}
\item the concept of the cone is unchanged,
\item the mass minimizing flat chain $S$ is replaced by the minimizing partition $T$ (or $A$), via the definition in \eqref{partition-min} above,
\item the monotonicity formula holds for minimizing partitions,
\item the notion of tangent cone and cone at infinity have exact analogs for minimizing partitions.
\end{enumerate}

\section{A related Bernstein-type theorem -- Statements and proofs}
\label{section5}
We are now ready to state a sample of a Bernstein-type theorem. We begin with $\R^2$.
\begin{theorem}[$n=2$]
\label{bernstein-theorem-1} 
Let $A$ be a complete minimizing partition in $\R^2$ with $N=3$ (three phases), with surface tension coefficients satisfying
\begin{equation}\label{inequality-e_ij-strict}
e_{ik} < e_{ij}+e_{jk}, \text{ for } j \neq i,k \text{ with } i,j,k \in \{1,2,3\}.
\end{equation}
Then, $\partial A$ is a triod.
\end{theorem}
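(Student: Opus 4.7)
The plan is to analyze $A$ through its cone at infinity $C_{\infty}$, classify the minimizing $3$-partition cones in $\R^2$, and then use the equality case of the monotonicity formula to transfer the conical structure at infinity to $A$ itself. To produce $C_{\infty}$ I first verify the density bound \eqref{bounded-theta}: the one-phase competitor on $B_r(x)$ gives the perimeter estimate $E(A;B_r(x))\leq (\max_{i,j} e_{ij})\cdot 2\pi r$, so $\Theta(A,x,r)\leq C$ for a constant independent of $x,r$, and the framework of Section~\ref{section4} then yields a blow-down $C_{\infty}$ that is itself a minimizing $3$-partition and a cone in $\R^2$ with vertex at the origin.

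Next I would classify the minimizing $3$-partition cones in $\R^2$. Such a cone is a finite union of open sectors separated by $k$ rays from the origin, and going once around the origin one reads off a cyclic word of length $k$ on $\{1,2,3\}$ with consecutive letters distinct. The strict triangle inequality \eqref{inequality-e_ij-strict} forces $k\leq 3$ by two local moves: if three consecutive sectors carry labels $i,j,i$, collapsing the middle one strictly saves $2e_{ij}$ per unit radius, while if they carry pairwise distinct labels $i,j,k$, relabelling the middle one from $j$ to $i$ saves $e_{ij}+e_{jk}-e_{ik}>0$. Either move contradicts minimality as soon as $k\geq 4$. Ruling out $k\leq 2$ on the grounds that fewer than three phases would appear leaves $k=3$, that is, a triod whose opening angles obey Herring's balance law by the first variation.

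Finally I would promote the triod structure at infinity to $A$ itself. By the standard $\varepsilon$-regularity for minimizing partitions (Almgren, Taylor), each interface $I_{ij}$ is $C^{1,\alpha}$ and the interfaces meet only at isolated triple junctions whose tangent cones are again classified by the previous step. If I can exhibit at least one triple junction $x_0\in\R^2$, then
$$\Theta(A,x_0)=\Theta_{\mathrm{triod}}=\Theta(C_{\infty})=\Theta(A),$$
so the monotone function $r\mapsto\Theta(A,x_0,r)$ is pinched between equal limits and must be constant in $r$; by the remark preceding the blow-up discussion in Section~\ref{section4}, $A$ is a cone with vertex $x_0$, and the classification of Step~2 identifies that cone as a triod.

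The hard part is precisely the existence of the triple point $x_0$. I would argue by contradiction: if $A$ had no triple junction, each nonempty interface $I_{ij}$ would be a complete smooth minimizing $1$-chain in $\R^2$, and so (zero curvature by minimality) a straight line. The triod structure of $C_{\infty}$ forces $I_{12},I_{13},I_{23}$ all to be nonempty and asymptotic to the three distinct rays of $C_{\infty}$; three lines with three distinct asymptotic directions in $\R^2$ cannot be pairwise disjoint, and the three-phase labelling constraint promotes every such crossing to a triple junction — contradicting the assumption and closing the argument.
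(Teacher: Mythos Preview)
Your overall strategy is the paper's --- Fleming's scheme of classifying the minimizing cones, bounding the density (your one-phase competitor is exactly Lemma~\ref{inequality-mass}), and pinching the monotone density ratio between blow-up and blow-down to force $A$ itself to be a cone. Two points in your execution, however, are genuine gaps.

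\emph{The cone classification.} Your collapse and relabel moves compare the \emph{densities} of two different cones (``saves $\ldots$ per unit radius''), but minimality is tested only against competitors that agree outside a compact set, and your modified cones have different traces on $\partial B_R$. Once the relabel move is made into an honest competitor (relabel the $j$-sector to $i$ only inside $B_R$), a new $i|j$ interface appears along the arc of $\partial B_R$ in that sector, and the net saving becomes
\[
R\bigl[(e_{ij}+e_{jk}-e_{ik})-e_{ij}\alpha\bigr],
\]
with $\alpha$ the angular width of the $j$-sector; this need not be positive. The paper's Lemma~\ref{only-minimizing-cones} instead uses a chord-type competitor for the $i,j,i$ case (Figures~\ref{figure3a}--\ref{figure3c}) and, for the residual periodic $(1,2,3)^m$ patterns that survive, a weighted Steiner argument (Figure~\ref{figure3d}): the vertex would have to be the Steiner point of three consecutive ray-endpoints, so the three Young angles already exhaust $2\pi$ and $m=1$.

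\emph{A circularity between $C_\infty$ and the triple point.} You dismiss $k\le2$ in the classification because ``fewer than three phases would appear'' and then use the triod structure of $C_\infty$ in Step~4 to produce a triple junction in $A$. But a blow-down of a genuine $3$-partition may lose a phase and converge to a line; nothing you have written excludes this, so Step~4 rests on an unproved premise. The paper runs the logic in the opposite order: it \emph{starts} from a junction $P$ (taken as given from regularity), whose tangent cone is a triod; monotonicity then gives $\Theta(C_\infty)\ge\Theta(C_0,P)$, which strictly exceeds the density of any line by \eqref{inequality-e_ij-strict}, so Lemma~\ref{only-minimizing-cones} forces $C_\infty$ to be a triod as well. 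If you want to keep your order, you need an independent argument that $C_\infty$ cannot be a line --- for instance, by showing directly that a ``striped'' $3$-partition (parallel straight interfaces, no junctions, all three phases present) is never minimizing.
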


\begin{figure}[t]
\begin{center}
\begin{picture}(0,0)%
\includegraphics{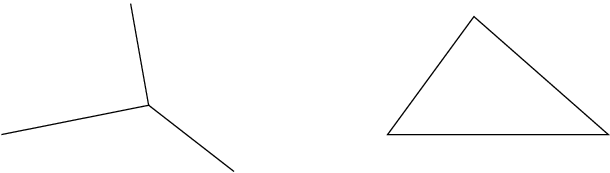}%
\end{picture}%
\setlength{\unitlength}{4144sp}%
\begin{picture}(4648,1302)(-236,434)
\put(665,1050){$\theta_2$}%
\put(2808,1289){$e_{23}$}%
\put(3877,1289){$e_{12}$}%
\put(3427,550){$e_{13}$}%
\put(3990,810){$\hat{\theta}_1$}%
\put(2900,810){$\hat{\theta}_3$}%
\put(3330,1380){$\hat{\theta}_2$}%
\put(798,748){$\theta_3$}%
\put(980,999){$\theta_1$}%
\put(1300,1250){$1$}%
\put(200,1200){$2$}%
\put(700,400){$3$}%
\end{picture}%
\caption{}
\label{figure1}
\end{center}
\end{figure}

In Figure \ref{figure1} we show a triod with angles $\theta_1$, $\theta_2$, $\theta_3$, and the corresponding triangle with their supplementary angles $\hat{\theta}_i = \pi - \theta_i$. For these angles Young's law holds, that is, 
\[ \frac{\sin \hat{\theta}_1}{e_{23}} = \frac{\sin \hat{\theta}_2}{e_{13}} = \frac{\sin \hat{\theta}_3}{e_{12}}. \]

We recall that under the condition of the strict triangle inequality for the surface tension coefficients, White has established a general regularity result which applies in particular under \eqref{inequality-e_ij-strict} to $A$ above. His result improves on Almgren's work \cite{almgren}. Detailed proofs can be found in Chan's thesis \cite{chan} (\S 1.6 and pp.\ 10--14). It follows that $A$ consists of triple junctions and line segments, always a finite number in any given open and bounded subset of $\R^2$.

We present the proof of Theorem \ref{bernstein-theorem-1} in three steps. The first two are two lemmas that we state next.

\begin{lemma}\label{only-minimizing-cones}
The only minimizing cones are the straight line and the triod.
\end{lemma}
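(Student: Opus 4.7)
The plan is to combine the dilation-invariance of a cone with the regularity theorem cited just before the lemma (under the strict triangle inequality, singularities of a minimizing $3$-partition in $\R^2$ are triple junctions, regular interfaces are line segments, and in any bounded open set there are only finitely many of each).

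First I would fix the vertex at the origin. Since the cone partition $A$ is invariant under every homothety $x\mapsto \lambda x$ with $\lambda>0$, the interface $\partial A\setminus\{0\}$ decomposes into a family of open rays emanating from the origin, and between consecutive rays there is an open sector on which the partition label is constant. After discarding any ray whose two adjoining sectors carry the same label (which is not really an interface), I may assume that consecutive sectors bear distinct labels, and I let $k$ denote the number of genuine rays.

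Next I would rule out singularities away from the vertex. If some $p\neq 0$ were a triple junction of $\partial A$, then by dilation-invariance every point of the ray $\{\lambda p:\lambda>0\}$ would also be a triple junction, contradicting the local finiteness asserted by the regularity theorem. Hence each point of $\R^{2}\setminus\{0\}$ lies either in the interior of a phase or on a smooth line-segment of interface, and all non-trivial behaviour of $\partial A$ is concentrated at the origin. I then enumerate $k$. The values $k=0$ and $k=1$ are trivial or impossible (one ray cannot close the cyclic pattern of distinct labels). For $k=2$, the origin is neither a triple junction nor an interior point of a phase, so by regularity it must lie on a smooth segment; this forces the two rays to be collinear, giving the straight line. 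For $k=3$, the three sectors must carry three distinct labels (otherwise a ray between two equally-labelled sectors would fail to separate phases), so the origin is a genuine triple junction, and a standard first-variation computation yields the angle relations of Young's law stated after the theorem — this is the triod.

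The main obstacle I foresee is the exclusion of $k\geq 4$. If one is willing to quote the regularity theorem, the argument is immediate since the origin would then be a $k$-junction rather than a triple junction. A self-contained treatment, however, requires an explicit energy comparison: near a $k$-junction with $k\geq 4$ one splits the vertex into two triple junctions joined by a short segment consistent with the cyclic label pattern, and the strict triangle inequality \eqref{inequality-e_ij-strict} is precisely what guarantees that this Steiner-type modification strictly decreases $\M(\partial A\corner B_{r})$ for suitably small $r$, contradicting minimality of the cone. This concludes the case analysis and shows that only the straight line and the triod are minimizing.
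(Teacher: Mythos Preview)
Your main argument---invoking the cited regularity theorem directly at the origin to exclude a $k$-junction with $k\notin\{2,3\}$---is valid and gives a correct proof, but it is not the route the paper takes. After using regularity only to assert that a minimizing cone consists of finitely many rays, the paper does \emph{not} appeal again to the classification of singularities at the vertex. Instead it performs two explicit energy comparisons. First, it derives Young's balance law from the weighted Steiner problem for three fixed points. Second, it shows connectedness of each phase by a direct competitor: if a wedge of phase $j$ is sandwiched between two wedges of the same phase $i$, one replaces the middle wedge by phase $i$ (with a small boundary adjustment so that the symmetric difference is compactly contained in the disk) and strictly decreases the mass. Third, for the surviving alternating pattern $1,2,3,1,2,3,\dots$, it selects an inscribed triangle $ABC$ whose vertices lie on three of the rays and argues that, by minimality, the vertex $O$ must solve the associated Steiner problem; Young's law then forces the three interface angles at $O$ to fill the full $2\pi$, which is incompatible with extra rays. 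Your approach is shorter but treats the statement ``singularities are triple junctions'' as a black box; since that statement is customarily proved precisely by blowing up and classifying tangent cones---the very content of this lemma---you should be alert to the risk of circularity. The paper's argument is longer but closer to self-contained, and it delivers Young's law explicitly as a byproduct.

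Your proposed self-contained alternative for $k\geq 4$ (splitting the vertex into two triple junctions joined by a short segment) does not work as stated. With only three phases a $4$-junction necessarily carries the cyclic label pattern $a,b,a,c$; the two $a$-sectors are opposite, and any short segment connecting the two halves lies in the interior of phase $a$, carries no interfacial energy, and leaves two corners rather than two triods. The comparison that actually succeeds here is the paper's wedge-flip (replace the $b$-wedge by $a$), which removes two $a|b$ rays outright; for the alternating $1,2,3,1,2,3$ pattern one can likewise flip a wedge to match one neighbour, and the strict triangle inequality \eqref{inequality-e_ij-strict} gives the strict decrease. That is a different modification from the Steiner split you describe.
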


\begin{lemma}\label{inequality-mass}
There holds $\M(\partial A \corner B_R) \leq CR$.
\end{lemma}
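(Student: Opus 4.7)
The plan is to exploit the minimizing property of $A$ by comparing it against a trivial competitor that fills $B_R$ with a single phase. Fix an index, say $j_0 = 1$, and for each $R>0$ define the competitor partition
\[
A^R_1 := A_1 \cup B_R, \qquad A^R_i := A_i \setminus \overline{B_R} \text{ for } i = 2,3.
\]
Since $\bigcup_{i=1}^3 (A_i \symmdiff A^R_i) \subset \overline{B_R}$, the admissibility condition \eqref{partition-min} is satisfied with $W = B_{R+\varepsilon}$ for any $\varepsilon > 0$, so the minimizing property of $A$ gives $E(A; B_{R+\varepsilon}) \leq E(A^R; B_{R+\varepsilon})$.

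By construction $A$ and $A^R$ coincide on the annulus $B_{R+\varepsilon} \setminus \overline{B_R}$, and the only interfaces of $A^R$ inside $B_{R+\varepsilon}$ that do not come from $A$ lie on the sphere $\partial B_R$, where the uniform phase $1$ inside $B_R$ meets the traces of phases $2$ and $3$ that $A$ carried immediately outside $B_R$. Subtracting the common annular contribution from both sides and letting $\varepsilon \downarrow 0$ yields
\[
E(A; B_R) \;\leq\; \sum_{i=2,3} e_{1i}\, \mathcal{H}^1(\partial B_R \cap A_i) \;\leq\; 2\pi R\, e_{\max},
\]
with $e_{\max} := \max_{i,j} e_{ij}$. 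The identification \eqref{mass-t-w} of $\S$\ref{the-group-of-surface-tension-coefficients} then gives $\M(\partial A \corner B_R) = E(A; B_R)$, and the lemma follows with $C = 2\pi e_{\max}$.

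The main obstacle is purely technical bookkeeping: verifying admissibility of $A^R$ in the precise sense of \eqref{partition-min}, and ensuring the annular cancellation is exact despite possible interface of $A$ on $\partial B_R$ itself. The latter is resolved by first restricting to the cocountable set of radii $R$ for which $\mathcal{H}^1(\partial B_R \cap \bigcup_{i<j} I_{ij}) = 0$, and then extending to every $R>0$ by the monotonicity of $R \mapsto E(A; B_R)$. No conceptual difficulty arises; in particular the argument uses neither the strict triangle inequality \eqref{inequality-e_ij-strict} nor the interior regularity result cited above — only the defining comparison property of a minimizing partition together with the weaker inequality \eqref{inequality-e_ij} that is built into the group $G$ of surface-tension coefficients.
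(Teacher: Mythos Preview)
Your argument is correct and rests on the same underlying idea as the paper's proof: compare $A$ against a competitor that replaces everything inside a ball by a single phase, so the only new interface lies on a circle of circumference $2\pi R$.

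The implementations differ in how the competitor is made to match $A$ on the boundary. The paper first invokes the regularity of $\partial A$ to ensure that $\partial A\cap\partial B_R$ is a finite set of $k$ points, then builds a competitor $\tilde A$ that agrees with $A$ on an outer annulus $B_{R'}\setminus B_R$, extends radially by $k$ short segments of length $\varepsilon^2$, and fills $B_{R-\varepsilon^2}$ with phase~$1$; the estimate $\M(\partial\tilde A\corner B_{R'})\le \varepsilon+k\varepsilon^2+2\pi(R-\varepsilon^2)$ then gives the bound after choosing $\varepsilon=1/k$. Your construction is more direct: you simply set the competitor equal to phase~$1$ on $B_R$ and use $\partial B_R$ itself as the new interface, handling the boundary matching by restricting to the cocountable set of radii where $\partial A$ carries no mass on $\partial B_R$ and extending by monotonicity. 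The payoff is that your proof needs neither the strict triangle inequality nor any interior regularity of the minimizing partition---only the comparison inequality that defines ``minimizing''---whereas the paper's construction explicitly relies on regularity to know $k<\infty$. Both yield the same $O(R)$ bound with a constant depending on the $e_{ij}$.
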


Accepting for the time being the lemmas above, we can conclude with the proof of the theorem.

\begin{proof}[Proof of Theorem \ref{bernstein-theorem-1}]
Let $P$ be one of the junctions and consider the tangent cone $C_0$ at $P$, which by Lemma \ref{only-minimizing-cones} is a triod. We have
\begin{align}\label{theta-c_0}
\Theta(C_0,P)=\Theta(A,P)&\leq \frac{\M(A \corner B_R(P))}{\pi R^2} \leq \lim_{R \to +\infty}\frac{\M(A \corner B_R(P))}{\pi R^2} \leq \Theta (A),
\end{align}
where we used the monotonicity formula \eqref{monotonicity-cones} and also the bound provided by Lemma \ref{inequality-mass} above.

Let $C_{\infty}$ be any of the cones at infinity. Then
\begin{equation}\label{inequality-c_infty}
1<\Theta(C_0,P) \leq \Theta(A)=\Theta(C_{\infty}).
\end{equation}
From Lemma \ref{only-minimizing-cones} we conclude that
\begin{equation}\label{c_0=c_infty}
C_0=C_{\infty}, \text{ up to congruence.}
\end{equation}
Returning back to \eqref{theta-c_0} and utilizing \eqref{c_0=c_infty}, we obtain
\begin{equation}\label{mass-theta-a}
\frac{\M(A \corner B_R(P))}{\pi R^2} = \Theta (A), \text{ for all } R>0.
\end{equation}
It follows from \eqref{mass-theta-a} that $A$ is a cone and from the first inequality in \eqref{inequality-c_infty} that it is a singular one, hence a triod by Lemma \ref{only-minimizing-cones}.
\end{proof}

Note that the scheme above for proving a Bernstein-type theorem was introduced by Fleming \cite{fleming}, where it was used to give a new proof of the classical Bernstein theorem in $\R^3$. (See p.\ 193 in \cite{colding-minicozzi}.)

We now give the proofs of Lemmas \ref{only-minimizing-cones} and \ref{inequality-mass}.

\begin{proof}[Proof of Lemma \ref{only-minimizing-cones}]
We begin by recalling the classical Steiner problem adapted in our weighted setup. Let $A$, $B$, $C$, be three fixed points on the plane. Given a fourth point $\hat P$, not necessarily distinct from the other three, consider the weighted sum of the distances of $\hat P$ from the vertices of the triangle $ABC$, that is,
\begin{equation}\label{PABC}
 e_{12}|\hat P -A|+e_{23}|\hat P -C|+e_{13}|\hat P -B|.
\end{equation}
Suppose that the quantity above is minimized for $P \not= A,B,C$. 

\begin{figure}[t]
\begin{center}
\begin{picture}(0,0)%
\includegraphics{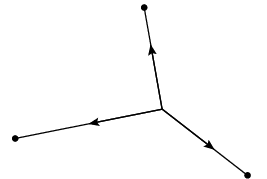}%
\end{picture}%
\setlength{\unitlength}{4144sp}%
\begin{picture}(2019,1353)(-339,419)
\put(1300,713){$\nu_{13}$}
\put(290,705){$\nu_{23}$}
\put(550,1388){$\nu_{12}$}
\put(980,999){$\theta_1$}
\put(665,1050){$\theta_2$}
\put(798,748){$\theta_3$}
\put(820,1650){$A$}
\put(1534,525){$B$}
\put(-250,810){$C$}
\end{picture}
\caption{}
\label{figure2}
\end{center}
\end{figure}

We recall Steiner's argument. Let
\[ \Gamma=\{ Q \in \R^2 \mid F(Q)=F(P) \}, \]
where
\[ F(Q)=e_{12}|Q -A|+e_{13}|Q -B|. \]
Then, one notes that necessarily the circle with center $C$ and radius $|P-C|$ has to be tangent to the curve $\Gamma$ at $P$. Therefore, $P$ in particular solves the problem
\[ \min F(Q), \text{ subject to } L(Q):=\langle Q, \nu_{23} \rangle=0, \]
from which it follows that
\[ e_{12} \nu_{12} + e_{13} \nu_{13}=\alpha \nu_{23}, \text{ for some } \alpha \in \R. \]
By replacing $A$, $B$ with $B$, $C$ first, and then with $C$, $A$, and repeating the argument in each of these cases we obtain respectively
\begin{align*}
e_{13} \nu_{13} + e_{23} \nu_{23} = \beta \nu_{12}, \text{ for some } \beta \in \R,\\
e_{12} \nu_{12} + e_{23} \nu_{23} = \gamma \nu_{13}, \text{ for some } \gamma \in \R.
\end{align*}
From these relationships it follows easily that
\[ e_{12} \nu_{12} + e_{13} \nu_{13} + e_{23} \nu_{23} = 0, \]
and thus Young's law is established.

We explain under which conditions the quantity in \eqref{PABC} is minimized for a $P \neq A,B,C$. It is clear that for the partitioning in Figure \ref{figure2} to be possible, one needs that all the inequalities $\hat A < (\pi -\theta_1)+(\pi -\theta_2)$, $\hat B < (\pi -\theta_1)+(\pi -\theta_3)$, and $\hat C < (\pi -\theta_2)+(\pi -\theta_3)$ should hold, where $\hat{\theta}$ stands for $\pi - \theta$. Thus, in general one needs that there is an arrangement so that all inequalities $\hat A < (\pi -\theta_i)+(\pi -\theta_j)$, $\hat B < (\pi -\theta_i)+(\pi -\theta_k)$, and $\hat C < (\pi -\theta_j)+(\pi -\theta_k)$ hold, where $i,j,k \in \{1,2,3 \}$, and $\theta_1$, $\theta_2$, $\theta_3$ are determined by the triangle in Figure \ref{figure1}. For example, if all surface tension coefficients are equal, then $P \neq A,B,C$ if and only if the largest angle of $ABC$ is less than $\frac{2 \pi}{3}$.

Next, we show the connectedness of each phase in a minimizing cone $C$. That is, each phase can appear only once (cf.\ Lemma~1.2 in \cite{chan}). We proceed by contradiction. First we exclude the possibility that one phase is separated by another. By regularity, the cone consists of finitely many rays emanating from the origin $O$. Suppose that we have the arrangement as in Figure \ref{figure3a}.

\begin{figure}[t]
\begin{subfigure}{0.4\textwidth}
\centering
\begin{picture}(0,0)%
\includegraphics{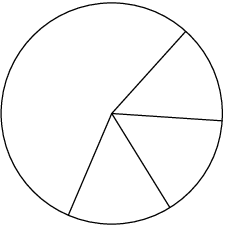}%
\end{picture}%
\setlength{\unitlength}{4144sp}%
\begin{picture}(1887,1860)(3537,-732)
\put(4894,447){$2$}%
\put(4219,305){$O$}%
\put(3910,-678){$A$}%
\put(4838,-622){$B$}%
\put(5288,165){$C$}%
\put(5020,915){$D$}%
\put(4387,-342){$2$}%
\put(4866,-88){$3$}%
\end{picture}%
\subcaption{}
\bigskip
\bigskip
\bigskip
\label{figure3a}
\end{subfigure}
\quad
\begin{subfigure}{0.4\textwidth}
\centering
\begin{picture}(0,0)%
\includegraphics{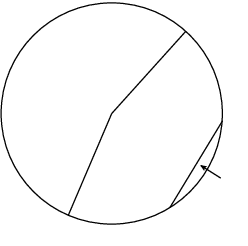}%
\end{picture}%
\setlength{\unitlength}{4144sp}%
\begin{picture}(1887,1860)(3537,-732)
\put(4219,305){$O$}%
\put(3910,-678){$A$}%
\put(4838,-622){$B$}%
\put(5288,165){$C$}%
\put(5020,915){$D$}%
\put(4700,50){$2$}%
\put(5250,-270){$3$}%
\end{picture}%
\subcaption{}
\bigskip
\bigskip
\bigskip
\label{figure3b}
\end{subfigure}
\begin{subfigure}{0.4\textwidth}
\centering
\begin{picture}(0,0)%
\includegraphics{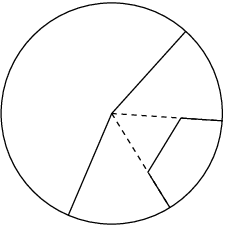}%
\end{picture}%
\setlength{\unitlength}{4144sp}%
\begin{picture}(1887,1860)(3537,-732)
\put(4613, 60){$2$}%
\put(4219,305){$O$}%
\put(3910,-678){$A$}%
\put(4838,-622){$B$}%
\put(5288,165){$C$}%
\put(5020,915){$D$}%
\put(4908,-102){$3$}%
\end{picture}%
\subcaption{}
\label{figure3c}
\end{subfigure}
\quad
\begin{subfigure}{0.4\textwidth}
\centering
\begin{picture}(0,0)%
\includegraphics{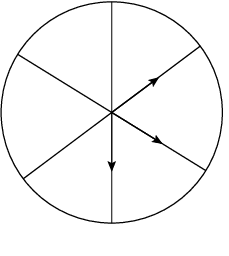}%
\end{picture}%
\setlength{\unitlength}{4144sp}%
\begin{picture}(1746,1949)(3537,-817)
\put(5007,265){$2$}%
\put(4480,590){$\nu_{12}$}%
\put(4778,115){$\nu_{23}$}%
\put(4455,-117){$\nu_{13}$}%
\put(4065,-327){$1$}%
\put(4628,785){$1$}%
\put(4057,783){$3$}%
\put(3699,223){$2$}%
\put(5119,784){$A$}%
\put(5147,-257){$B$}%
\put(4304,-763){$C$}%
\put(4627,-327){$3$}%
\end{picture}%
\subcaption{}
\label{figure3d}
\end{subfigure}
\label{figure3}
\caption{}
\end{figure}

We will exclude the case in Figure \ref{figure3a} by comparing with Figure \ref{figure3b}. The cone induces a partition $A$ of the unit disk $D$. Clearly, $\M(\partial A) > \M(\partial A')$, where $A'$ is the partition in Figure \ref{figure3b}, and since $A \symmdiff A' \subset \subset D$, we obtain a contradiction, since $C$ was assumed minimizing. One may object to whether $A \symmdiff A' \subset \subset D$ is satisfied. For this purpose, we can modify $A'$ as in Figure \ref{figure3c}. Thus the only remaining possibility is that the disk is partitioned by a number of triple junctions as in Figure \ref{figure3d}.

We will show that only a single triple junction is acceptable. Consider the triangle $ABC$. By the hypothesis, $C$ is a minimizing cone. Hence $O$ solves the Steiner problem with respect to this triangle. But then, the angles $\theta_1$, $\theta_2$, $\theta_3$ have to satisfy Young's law, and in particular have to add up to $2 \pi$, which completes the proof of the lemma.
\end{proof}

\begin{remark}
We mention below a more general and also simpler argument due to the referee, which establishes the connectedness of each phase. Note, however, that this argument is no substitute for Steiner's since obviously the center of a triod is not in general equidistant from the three points. That the admissible cones are made up of a finite number of rays follows from regularity.

Let $\nu_1,\dots,\nu_N$, with $\nu_{N+1} = \nu_1$, ordered counterclockwise around $O$, be the rays (unit vectors) of the unit disk $D$ which determine the minimizing cone $A$ and let $e_1,\dots,\e_N$ be the corresponding surface tension coefficients. Given $i<j$, we denote by $\nu_i \nu_j \subset D$ the sector swept by a unit vector $\nu$ that rotates in the positive direction from $\nu_i$ to $\nu_j$ around $O$. Since by assumption $A$ has three phases, if $N \geq 4$ there exist $i \neq j$ such that the sectors $\nu_i \nu_{i+1}$ and $\nu_j \nu_{j+1}$ are contained in the same phase. One at least of the sectors $\nu_{i+1} \nu_{j}$ and $\nu_{j+1} \nu_{i}$, say $\nu_{i+1} \nu_{j}$, has angle smaller than $\pi$. Set $\nu = (\nu_{i+1} + \nu_{j})/2$ and, for small positive $\varepsilon$, let $A_\varepsilon$ be the partition defined by displacing the first extreme $O$ of the rays $\nu_{i+1}, \nu_{i+2}, \dots, \nu_{j}$ to $O + \varepsilon \nu$. A simple computation yields 
\[
\frac{\dd}{\dd \varepsilon} \Big|_{\varepsilon = 0} \left( \sum_{k=i+1}^{j} e_k |\nu_k + \varepsilon \nu| \right) = \nu \cdot\! \sum_{k=i+1}^{j} e_k \nu_k < 0,
\]
in contradiction with the minimality of $A$. This establishes that $N=3$.

To prove Young's law we let $A_\varepsilon$ be the partition defined by displacing the first extreme of all the rays to $O + \varepsilon \nu$, where $\nu$ is an arbitrary vector. In this case, we obtain
\[
\frac{\dd}{\dd \varepsilon} \Big|_{\varepsilon = 0} \left( \sum_{k=i+1}^{j} e_k |\nu_k + \varepsilon \nu| \right) = \nu \cdot\! \sum_{k=i+1}^{j} e_k \nu_k.
\]
This, the minimality of $A$, and the arbitrary choice of $\nu$ imply that
\[
\sum_{k=1}^{N} e_{k} \nu_{k} = 0,
\]
and since $N=3$, it follows that $e_1 \nu_1 + e_2 \nu_2 + e_3 \nu_3 = 0$.
\end{remark}

\begin{proof}[Proof of Lemma \ref{inequality-mass}]
Consider the disk $B(O;R)$. By the regularity of $A$, the intersection 
\[ \partial B(O;R) \cap \partial A \] 
consists of finitely many points, say $k$. We now construct a test partition $\tilde A$ as follows. First, enlarge the $R$-disk slightly so that $\M(\partial A \corner (B_{R'} \setminus B_R)) < \varepsilon$, for $R'>R$, with $R'-R \ll 1$. This is possible by the regularity of $A$. Take $\partial \tilde A = \partial A$ inside the ring $R \leq |x| \leq R'$. Next, we introduce an $\varepsilon^2$-layer inside $B(O;R)$, in which we take $\partial \tilde A$ to consist of the union of $k$ $\varepsilon^2$-line segments orthogonal to $\partial B(O;R)$, emanating from the $k$ points on $\partial B(O;R)$. Finally, inside $B(O;R-\varepsilon^2)$ take a single phase, say phase $1$. (See Figure \ref{figure4b}.)

\begin{figure}[t]
\begin{subfigure}{0.4\textwidth}
\centering
\begin{picture}(0,0)%
\includegraphics{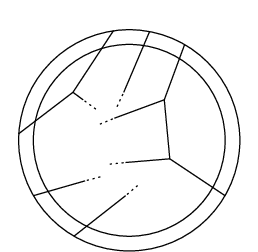}%
\end{picture}%
\setlength{\unitlength}{4144sp}%
\begin{picture}(0,0)%
\includegraphics{disk5.ps}%
\end{picture}%
\setlength{\unitlength}{4144sp}%
\begin{picture}(1959,1910)(3403,-574)
\put(3572,867){$1$}%
\put(3657,-455){$1$}%
\put(4669,1148){$2$}%
\put(4360,1190){$3$}%
\put(4247,811){$3$}%
\put(4444,249){$3$}%
\put(4528,755){$2$}%
\put(3797,164){$2$}%
\put(3403, -5){$2$}%
\put(3995,-230){$1$}%
\put(4866,361){$1$}%
\put(5260,530){$1$}%
\end{picture}%
\subcaption{}
\label{figure4a}
\end{subfigure}
\quad
\begin{subfigure}{0.4\textwidth}
\centering
\begin{picture}(0,0)%
\includegraphics{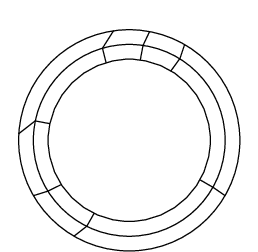}%
\end{picture}%
\setlength{\unitlength}{4144sp}%
\begin{picture}(1959,1910)(3403,-574)
\put(3657,-455){$1$}%
\put(4360,249){$1$}%
\put(3572,867){$1$}%
\put(5260,530){$1$}%
\put(3403, -5){$2$}%
\put(4360,1190){$3$}%
\put(4669,1148){$2$}%
\end{picture}%
\subcaption{}
\label{figure4b}
\end{subfigure}
\label{figure4}
\caption{}
\end{figure}

By construction, $A$ and $\tilde A$ have the same Dirichlet values in $B(O;R')$. We have
\begin{align*}
\M(\partial \tilde A \corner B_{R'})&=\M(\partial \tilde A \corner (B_{R'} \setminus B_R))+\M(\partial \tilde A \corner B_R) \\
&\leq \varepsilon +\M(\partial \tilde A \corner B_R) \\
&\leq \varepsilon +k\varepsilon^2 + 2 \pi (R -\varepsilon^2).
\end{align*}
Taking now $\varepsilon=\frac{1}{k}$, we obtain $\M(\partial \tilde A \corner B_{R'})\leq CR$, and thus, since $A$ is minimizing, the estimate of the lemma follows.
\end{proof}

\begin{remark}
Suppose $C$ is an $(n-1)$-dimensional cone embedded in $\R^n$, $3 \leq n \leq 7$, with zero mean curvature, which is also stable, then $C$ is a hyperplane. This fails for $n \geq 8$. This is a classical result due to Simons (see Appendix B in \cite{simon1}). The difference with the cones considered here lies in orientation. In the oriented case above there is a globally well-defined unit normal and stability is understood in the class of normal perturbations. The second variation gives the stability inequality
\[ \int_C \left\{ |\nabla^C z|^2-|A|^2 z^2 \right\} \geq 0,\]
where $|A|$ is the second fundamental form of $C$. This is not the appropriate formula for the class of unoriented objects we consider here. The perturbations in the unoriented case have also tangential components. Another difference is in the growth estimate 
\[ \M(B_r \cap \Sigma) \leq Cr^{n-1},\] 
which is immediate for minimizing complete hypersurfaces in $\R^n$ (see p.~4 in \cite{colding-minicozzi}).
\end{remark}

Our purpose next is to establish the following theorem.

\begin{theorem}[$n=3$]\label{bernstein-theorem-2} 
Let $A$ be a complete minimizing partition in $\R^3$ with $N=3$ (three phases), with surface tension coefficients satisfying
\[ e_{ik} < e_{ij}+e_{jk}, \text{ for } j \neq i,k \text{ with } i,j,k \in \{1,2,3\}, \]
and with the property that the liquid edges do not intersect. Then,
\begin{equation}\label{cylindrical-cone}
\partial A= \R \times C_\mathrm{tr} \quad \text{(cylindrical cone),}
\end{equation}
where $C_\mathrm{tr}$ is the triod on the plane.
\end{theorem}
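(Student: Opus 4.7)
The plan is to reproduce the Fleming-type scheme behind Theorem~\ref{bernstein-theorem-1}, now in $\R^3$, with the non-intersection hypothesis playing the role that planarity played before. Three ingredients are required: (a) a classification of minimizing three-phase cones in $\R^3$ compatible with the non-intersection hypothesis, (b) a quadratic mass bound $\M(\partial A \corner B_R) \leq C R^2$, and (c) the Fleming density-chain computation comparing tangent cones at triple points with cones at infinity. Once these are in hand the conclusion follows exactly as in the proof of Theorem~\ref{bernstein-theorem-1}.

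For~(a), the target is the three-dimensional analog of Lemma~\ref{only-minimizing-cones}: every complete minimizing three-phase cone $C$ in $\R^3$ whose liquid edges do not intersect is either a plane or the cylindrical triod $\R \times C_{\mathrm{tr}}$. Under strict triangle inequality, the regularity theory of White (see~\cite{white,chan}) says that $C \cap \mathbb{S}^2$ is a finite geodesic network with triple-junction vertices obeying Young's law, and Taylor's classification yields exactly two local singular models: the $Y$-type (three half-planes meeting along a line, i.e.\ $\R \times C_{\mathrm{tr}}$) and the $T$-type (tetrahedral, with four liquid edges meeting at a point). Because $C$ is a cone with vertex at the origin, every liquid edge is a ray from $O$, so a $T$-type singularity would force four liquid rays to cross at $O$, which the non-intersection hypothesis forbids. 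The two-phase configuration is ruled out (since all three phases are assumed present) by the strict triangle inequality, as in Lemma~\ref{only-minimizing-cones}, leaving only the cylindrical triod in the non-planar case.

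For~(b), the proof of Lemma~\ref{inequality-mass} transposes directly: slightly enlarge $B_R$ to $B_{R'}$ so that $\M(\partial A \corner (B_{R'}\setminus B_R)) < \varepsilon$, fill $B_{R-\varepsilon^2}$ with a single phase, and connect the Dirichlet data on $\partial B_{R'}$ to the interior through a thin $\varepsilon^2$-collar of normal cylindrical surfaces attached along the rectifiable curve $\partial A \cap \partial B_{R'}$; comparison with this test partition yields $\M(\partial A \corner B_R) \leq 4\pi R^2 + o(R^2)$. Step~(c) then proceeds as follows: pick $P$ on a liquid edge of $A$, and let $C_0$ be the tangent cone at $P$, which by~(a) is cylindrical over a triod and has some density $\theta_\ast > 1$ depending only on the $e_{ij}$. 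By monotonicity~\eqref{monotonicity-cones} and~(b),
\[
\theta_\ast = \Theta(C_0,0) = \Theta(A,P) \leq \lim_{R \to +\infty} \frac{\M(\partial A \corner B_R(P))}{\pi R^2} = \Theta(A) = \Theta(C_\infty,0).
\]
Every cone at infinity $C_\infty$ is again cylindrical over a triod by~(a), and all such cones are congruent and therefore share the density $\theta_\ast$. Hence $\Theta(A,P,R) \equiv \theta_\ast$ for all $R>0$, which forces $A$ itself to be a cone with vertex at $P$; applying~(a) together with the presence of three phases identifies $A$ with $\R \times C_{\mathrm{tr}}$.

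The main obstacle will be~(a). The local Taylor dichotomy $Y$ versus $T$ is standard, but upgrading it to a global statement about complete minimizing three-phase cones in $\R^3$ is delicate: one must verify that any configuration other than a plane or a cylindrical triod either violates Young's law somewhere on $\mathbb{S}^2$ or, by cone invariance, produces liquid rays crossing at the vertex. The non-intersection hypothesis is precisely what excludes the tetrahedral $T$-cone and, more generally, any configuration in which several liquid rays meet at a common point, leaving the cylindrical cone as the only admissible non-trivial possibility.
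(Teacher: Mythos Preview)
Your Fleming-style scheme is natural, but it is \emph{not} the route the paper takes, and as written it has a gap.

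The paper does not run a density comparison between tangent cones and cones at infinity in $\R^3$. Instead, after establishing the quadratic mass bound (Lemma~\ref{inequality-mass-2}), it invokes Chan's stability theorem (Theorem~\ref{theorem Chan}): since minimizing implies stable and the mass bound gives $\Theta(A)<\infty$, Chan's result (second variation plus a logarithmic cutoff) forces every regular piece of $\partial A$ to be planar. The liquid edges are then straight lines, and the non-intersection hypothesis is applied \emph{directly to $A$} --- not to any limiting cone --- to conclude that these lines are parallel, whence $A=A_2\times\R$ for a planar minimizing partition $A_2$. Theorem~\ref{bernstein-theorem-1} then identifies $A_2$ with the triod.

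The gap in your argument is in step~(c), where you assert that ``every cone at infinity $C_\infty$ is again cylindrical over a triod by~(a).'' Your statement~(a) is conditional on the cone having non-intersecting liquid edges, and you give no mechanism for transferring that hypothesis from $A$ to $C_\infty$. Under blowdown, disjoint liquid curves of $A$ (two skew lines, say) can converge to distinct rays through the origin, which meet there; the hypothesis simply does not pass to the limit. To close your argument you would need the \emph{unconditional} classification of minimizing three-phase cones in $\R^3$, and your sketch of~(a) does not supply it: the Taylor $Y$/$T$ dichotomy you invoke is a statement about local tangent models at singular points, not about the global structure of a complete cone, and the $T$-cone carries four phases so is irrelevant to a three-phase problem in any case. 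What must actually be excluded are cones whose trace on $\mathbb{S}^2$ is a trivalent geodesic network with several triple vertices and only three phase labels distributed among the faces; ruling these out is a separate, non-trivial step. The paper sidesteps this cone classification entirely by going through Chan's theorem, which is why the non-intersection hypothesis is used only once, at the level of $A$ itself.
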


We note that the subset of $\partial A$ whose tangent cones contain exactly three phases is the union of the liquid edges, denoted by $\Sigma_3(A)$. We recall that in the case of equal surface tension coefficients the cone in \eqref{cylindrical-cone} represents one of the two stable types of singularities that soap films can form. This was shown by Taylor \cite{taylor} who also proved $(e_{ij}=1)$ that if $A$ is a minimizing partition (not necessarily complete), then $\Sigma_3(A)$ consists of a union of $C^{1,\alpha}$ curves. Furthermore, Kinderlehrer, Nirenberg, and Spruck \cite{kinderlehrer-nirenberg-spruck} showed that these curves are real analytic. 

The basis for Theorem \ref{bernstein-theorem-2} above is the following result of Chan \cite{chan}.

\begin{theorem}[\cite{chan}]\label{theorem Chan}
Let $A$ be a complete stable partition in $\R^3$ with $N=3$ (three phases), where $\Sigma_3(A)=\cup_i \gamma_i$, with $\gamma_i$ a smooth curve, and suppose that the density at infinity $\Theta(A)$ is finite. Then, $A \setminus \Sigma_3$ is a union of planar pieces.
\end{theorem}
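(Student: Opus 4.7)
The plan is to reduce the theorem to a Fischer-Colbrie--Schoen type Bernstein statement applied face-by-face. Fix a connected smooth piece $F$ of $\partial A \setminus \Sigma_3$. Since $F$ separates a specific ordered pair of phases, it is two-sided with a well-defined unit normal $\n$. The first task is to derive from the stability of the partition the classical stability inequality
\[
\int_F \bigl( |\nabla^F \varphi|^2 - |A_F|^2 \varphi^2 \bigr) \dd \mathcal{H}^2 \geq 0, \quad \text{for all } \varphi \in C_c^\infty(F),
\]
where $|A_F|$ denotes the second fundamental form of $F$. For such a test function, the variation $\Phi_t(x) = x + t\varphi(x)\n(x)$ moves only $F$ and is supported in an open set bounded away from $\Sigma_3$; all other faces of the partition remain fixed, so the resulting one-parameter family $A_t$ is an admissible competitor. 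Because $F$ is stationary (it is a smooth portion of a minimizing cluster boundary, hence minimal), the second variation of $\sum e_{ij}\,\M(I_{ij})$ applied to $\Phi_t$ collapses to $e_{F}$ times the standard two-sided expression $\int_F (|\nabla^F \varphi|^2 - |A_F|^2 \varphi^2)$, and nonnegativity follows from the stability hypothesis.

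Next I would record the quadratic area bound. The monotonicity formula for minimizing partitions (item (iii) in the Relationship with partitions paragraph of Section \ref{section4}), together with the hypothesis $\Theta(A) < +\infty$, gives
\[
\M(\partial A \corner B_R(x_0)) \leq \Theta(A) \,\omega_2 R^2, \quad \text{for all } x_0, R,
\]
and in particular $\M(F \cap B_R) \leq C R^2$.

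With these two ingredients, the proof becomes the classical Fischer-Colbrie--Schoen logarithmic cutoff argument for two-sided stable minimal surfaces in $\R^3$. Define
\[
\psi_R(x) = \begin{cases} 1, & |x| \leq R, \smallskip\\ \dfrac{\log(R^2/|x|)}{\log R}, & R \leq |x| \leq R^2, \smallskip\\ 0, & |x| \geq R^2. \end{cases}
\]
To make $\psi_R$ admissible on $F$ it must be cut off in a tubular neighbourhood of $\Sigma_3 \cap F$; since $\Sigma_3$ is a union of smooth curves and hence has vanishing $2$-capacity inside the surface $F$, one can choose an auxiliary cutoff $\eta_\varepsilon$ that equals $1$ outside an $\varepsilon$-neighbourhood of $\Sigma_3$ and satisfies $\int_F |\nabla^F \eta_\varepsilon|^2 \to 0$. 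Using the area bound,
\[
\int_F |\nabla^F \psi_R|^2 \dd \mathcal{H}^2 \leq \frac{C}{(\log R)^2} \int_{F \cap (B_{R^2} \setminus B_R)} \frac{\dd \mathcal{H}^2}{|x|^2} \leq \frac{C'}{\log R} \longrightarrow 0,
\]
by a dyadic decomposition of the annulus. Substituting $\varphi = \psi_R \eta_\varepsilon$ into the stability inequality and letting first $\varepsilon \to 0$ and then $R \to +\infty$ forces $\int_F |A_F|^2 = 0$, so $|A_F| \equiv 0$ and $F$ lies in a plane.

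The step I expect to be the main obstacle is the first: verifying that the partition-level notion of stability really reduces, for perturbations supported in the interior of a single face, to the classical two-sided second-variation inequality above, and then checking that the cutoff $\psi_R \eta_\varepsilon$ constructed to vanish near $\Sigma_3$ is truly compactly supported in the (relatively open) smooth face $F$ so that it produces an admissible competing partition. Once this bookkeeping is done, the monotonicity-plus-logarithmic-cutoff scheme is standard and the planarity of every smooth piece of $\partial A \setminus \Sigma_3$ follows, which is the statement of the theorem.
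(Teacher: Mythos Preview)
Your capacity step is wrong, and the gap is fatal for this approach. You claim that ``$\Sigma_3$ is a union of smooth curves and hence has vanishing $2$-capacity inside the surface $F$,'' so that one can find cutoffs $\eta_\varepsilon$ vanishing near $\partial F$ with $\int_F |\nabla^F \eta_\varepsilon|^2 \to 0$. But $\partial F \subset \Sigma_3$ is a \emph{one}-dimensional set inside the \emph{two}-dimensional surface $F$, and a curve has strictly positive $W^{1,2}$-capacity in a surface: for a strip of width $\varepsilon$ around a boundary arc of length $L$ one has $\int_F |\nabla \eta_\varepsilon|^2 \gtrsim L/\varepsilon$ for the linear cutoff, and no logarithmic profile helps, since $\int_\varepsilon^\delta r^{-2}\,dr$ diverges. (It is a \emph{point} in a surface that has zero logarithmic capacity, not a curve.) Consequently you cannot manufacture a test function $\varphi$ compactly supported in the open face $F$ that equals $1$ on $F\cap B_R$ and has small Dirichlet energy; the stability inequality restricted to a single face therefore gives you nothing.

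This is precisely why the proof in Chan's thesis, as the paper sketches immediately after the theorem, does \emph{not} restrict the variation to one face. Instead one takes normal variations that are nonzero along the liquid edges and builds a vector field on all three sheets meeting at a curve $\gamma_i$ simultaneously; the Young angle condition at the junction is used to match the three pieces so that the boundary integrals $\int_{\partial M}(\cdots)$ in the second-variation formula cancel across $\gamma_i$. Only after this matching does the logarithmic cutoff at infinity (together with the finiteness of $\Theta(A)$, which gives the quadratic area growth) force $|B|\equiv 0$ on every sheet. Your face-by-face reduction to Fischer-Colbrie--Schoen bypasses exactly the nontrivial part of the argument.
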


A smooth partition is \emph{stable} if the second variation is nonnegative for $C^2$ compactly supported perturbations $Y$, that is,
\begin{multline*}
\frac{\dd^2}{\dd t^2} \Big|_{t=0} \M(M+tY) = \int_M (-|B|^2(Y^{\bot})-Y^{\bot}\! \cdot \Delta_M (Y^{\bot}) ) \\
\quad-\int_{\partial M}(\dv_M Y^{\top}(Y^{\top}\! \cdot \mu)+[Y^{\bot},Y^{\bot}] \cdot \mu ) \geq 0,
\end{multline*}
where $M$ is a regular surface immersed in $\R^3$, the vector field $Y$ is defined in a neighborhood of $M$, $\mu$ is the unit outward conormal on $\partial M$, $Y^{\bot}$ and $Y^{\top}$ are the normal and tangential components of $Y$ respectively, and $|B|$ is the length of the second fundamental form on $M$ (see \cite{spivak}).

By rearranging the stability inequality above one obtains control of the second fundamental form by terms involving the test variations $Y$. One of the important ingredients in the proof is the choice of the variations $Y$ at the junctions, where the Young angle conditions are utilized for the matching. The argument finally utilizes a logarithmic cut-off function, that is, the `logarithmic trick' (see p.~30 in \cite{colding-minicozzi}), and renders $|B|=0$. We note that Chan's result as it stands does not exclude the possibility, for example, of two triple-junctions (see Figure \ref{figure5a}).

In Theorem \ref{bernstein-theorem-2} we assume that the partition is minimizing, a much stronger condition than stability. The smoothness follows by the regularity results of White mentioned after the statement of Theorem \ref{bernstein-theorem-1} above.

The proof of Theorem \ref{bernstein-theorem-2} is based on the following lemma.

\begin{lemma}\label{inequality-mass-2}
There holds $\M(\partial A \corner B_R) \leq CR^2$.
\end{lemma}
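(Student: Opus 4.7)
The plan is to adapt the competitor argument of Lemma \ref{inequality-mass} to three dimensions: replace the interior of a large ball by a single phase, transitioning through a thin layer whose mass is controlled by the one-dimensional trace $\partial B_R \cap \partial A$. Given $R>0$, I first invoke the regularity theorem for minimizing three-phase partitions cited after Theorem \ref{bernstein-theorem-1}: $\partial A$ is a stratified set consisting of $C^{1,\alpha}$ 2-surfaces meeting along smooth liquid-edge curves $\Sigma_3$, with at most isolated higher-order singularities in any bounded region. By Sard's theorem, for almost every $R$ the sphere $\partial B_R$ is transverse to this stratification, so $\partial B_R \cap \partial A$ is a finite union of smooth curves of total length $\ell = \ell(R) < \infty$. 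Choose then $R'>R$ slightly larger so that $\M(\partial A \corner (B_{R'}\setminus B_R)) < \varepsilon$.

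Next I construct the competitor $\tilde A$. On $B_{R'}\setminus B_R$ set $\tilde A = A$. In the annular shell $B_R \setminus B_{R-\varepsilon^2}$, extend each 2-dimensional face of $\partial A$ meeting $\partial B_R$ radially inward by a distance $\varepsilon^2$, producing strips erected over the intersection curves; between these strips, propagate the phase labels of $A$ radially inward from $\partial B_R$. Inside $B_{R-\varepsilon^2}$ assign a single phase, say phase $1$. By construction the symmetric differences $\tilde A_i \symmdiff A_i$ are contained in $B_R \subset\subset B_{R'}$, so $\tilde A$ is an admissible perturbation and the minimality of $A$ yields $E(A;B_{R'}) \leq E(\tilde A;B_{R'})$.

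I then estimate the three contributions to $\M(\partial \tilde A \corner B_{R'})$. The piece inside $B_{R'} \setminus B_R$ coincides with $\partial A$ and contributes at most $\varepsilon$. The radial strips over the trace curves have total area at most $\ell\,\varepsilon^2$. The inner sphere $\partial B_{R-\varepsilon^2}$, which separates the single-phase core from the extended annular configuration, contributes at most $4\pi(R-\varepsilon^2)^2 \leq 4\pi R^2$. Hence
\[ \M(\partial \tilde A \corner B_{R'}) \leq \varepsilon + \ell\, \varepsilon^2 + 4\pi R^2. \]
Choosing $\varepsilon = 1/(1+\ell)$ absorbs the first two terms into a constant bounded by $2$, and the crude bounds $0 < e_{\min} \leq e_{ij} \leq e_{\max}$ convert the energy comparison into
\[ \M(\partial A \corner B_R) \leq \M(\partial A \corner B_{R'}) \leq \frac{e_{\max}}{e_{\min}}(4\pi R^2 + 2) \leq C R^2, \]
for $R \geq 1$, with the range $R \leq 1$ handled by the monotonicity formula for minimizing partitions.

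The anticipated obstacle is purely technical: one must make the radial extension into strips a rigorous admissible competitor in the flat-chain formulation of Paragraphs \ref{flat-chains-of-top-dimension}--\ref{the-group-of-surface-tension-coefficients}, checking that the assigned labels form a bona fide $N$-partition and that the modification has compact support in $B_{R'}$. Transversality of $\partial B_R$ with the stratification of $\partial A$, secured by the genericity of $R$, reduces this to straightforward bookkeeping, and the quadratic bound then falls out with a constant depending only on the surface-tension coefficients.
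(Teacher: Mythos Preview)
Your proposal is correct and follows essentially the same approach as the paper's proof: both invoke the regularity theory to ensure $\partial B_R \cap \partial A$ is a finite-length network, build a competitor that agrees with $A$ on a thin outer annulus, extends the trace curves radially inward through a thin shell, and fills the remaining ball with a single phase, yielding the $4\pi R^2$ term plus negligible corrections. The only cosmetic differences are your explicit use of Sard's theorem for transversality and the $e_{\min}/e_{\max}$ bookkeeping, which the paper leaves implicit.
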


\begin{proof}[Proof of Theorem \ref{bernstein-theorem-2}]
We employ a dimension reduction argument. Accepting for the moment the estimate in Lemma \ref{inequality-mass-2}, we conclude as follows. By regularity facts (see the proof of Lemma \ref{inequality-mass-2} below), Chan's theorem above applies and gives that $A \setminus \Sigma_3$ is a union of planar pieces. By the hypothesis that the liquid edges (now straight lines) do not intersect, $A$ has cylindrical structure, $A=A_2 \times\R$, where $A_2$ is a complete minimizing partition in $\R^2$. Thus, by Theorem \ref{bernstein-theorem-1} we are set.
\end{proof}

We note that it is easy to see that the cone at infinity $C_{\infty}$ for the two triple junction system in Figure \ref{figure5a} is as in Figure \ref{figure5b}, which is not minimizing.

\begin{figure}[t]
\begin{subfigure}{0.4\textwidth}
\centering
\includegraphics[scale=.5]{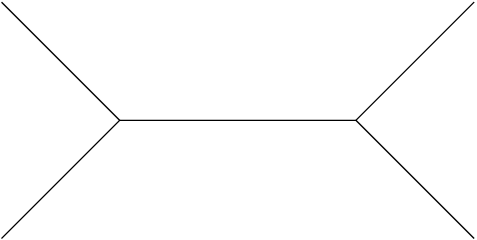}%
\subcaption{}
\label{figure5a}
\end{subfigure}
\quad
\begin{subfigure}{0.4\textwidth}
\centering
\includegraphics[scale=.5]{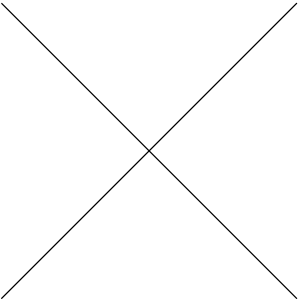}%
\subcaption{}
\label{figure5b}
\end{subfigure}
\caption{}
\label{figure5}
\end{figure}

The result of Theorem \ref{bernstein-theorem-2} states in particular that for minimizing $3$-partitions, the liquid edge is connected, or equivalently that each phase is connected. This question can be addressed in bounded domains $U \subset \R^n$. In \cite{alikakos-faliagas2}, Faliagas and the author establish connectedness for strictly convex $U$ in the class of stable partitions, thus extending to $3$-partitions the work of Sternberg and Zumbrun \cite{sternberg-zumbrun}.

\begin{proof}[Proof of Lemma \ref{inequality-mass-2}]
We begin by recalling certain regularity results. First, by White \cite{white} (see in \cite{chan}), $\partial A$ is regular except along a singular set of at most Hausdorff dimension $n-2=3-2=1$. Next, Simon \cite{simon2} (see also \cite{chan}) applies and gives that the singular set consists of $C^{1, \alpha}$ curves, since for $n=3$ and $N=3$ there are no gaps in the stratification of $\partial A$ by `spine' dimension. Finally, by Kinderlehrer, Nirenberg, and Spruck \cite{kinderlehrer-nirenberg-spruck}, these curves are actually real analytic.

The procedure for deriving the estimate is analogous to the two-dimensional case in Lemma \ref{inequality-mass}. We consider a ball $B(O;R)$ and look at the intersection $\partial B(O;R) \cap \partial A$, which consists of curves intersecting along a finite number of triple junctions (by our hypothesis that liquid edges do not intersect). Thus on $\mathbb{S}^2(O;R)$ we have a finite network made up of curves and triple junctions. We will be introducing two concentric spheres $\mathbb{S}^2(O;R_1)$ and $\mathbb{S}^2(O;R_2)$, with $R_1<R<R_2$, and a test partition $\tilde A$, which will be obtained by modifying $A$ in the annular region $\textrm{Ann}(O;R_1,R)$ in a way we explain below, and continued with a single phase in $B(O; R_1)$. Finally, $\tilde A$ will coincide with $A$ in $\textrm{Ann}(O;R,R_2)$. At the end the radii $R_1$, $R_2$ will be taken suitably close to $R$. To begin, we project radially on $\mathbb{S}^2(O;R_1)$ the network of curves and junctions $\partial B(O;R) \cap \partial A$, and also consider the surface swept out by this radial projection, whose mass can be estimated by
\[
\M(\partial \tilde A \corner \mathrm{Ann}(O;R_1,R))\leq L(R)(R-R_1),
\]
where $L(R)$ is the total length of $\partial B(O;R) \cap \partial A$, which by regularity is finite. Inside $B(O;R_1)$ we take a single phase, say phase $1$. Given $\varepsilon > 0$, by regularity we can take $R_2-R$ small enough so that
\[
\M(\partial A \corner \mathrm{Ann}(O;R,R_2)) <\varepsilon.
\]
Taking also $L(R)(R-R_1)< \varepsilon$, we obtain
\[
\M(\partial \tilde A \corner B_R)\leq \varepsilon+\varepsilon+4 \pi R_1^2 \leq 2 \varepsilon +4 \pi R^2,
\]
and since $A$ and $\tilde A$ have the same Dirichlet values in $B_R$ and $A$ is minimizing, the lemma is established.
\end{proof}

\section{The hierarchical structure of equivariant connection maps}
\label{section6}
In this section we study asymptotic properties of the equivariant solutions to system \eqref{system} produced by Theorem \ref{theorem1}. To explain the general result we have in mind, we begin with an example. Let $n=3$, $m=3$, and consider a quadruple-well potential $W$ with minima $\{ a_1, \dots, a_4 \}$, which is invariant under the group of symmetries of the regular tetrahedron, that is, $G=\mathcal{T}$. For example (see \cite{alikakos-fusco-pisa}), consider the potential
\[
W(u_1, u_2, u_3) = |u|^4 - \frac{4}{\sqrt{3}} (u^{2}_{1} - u^{2}_{2}) u_3 - \frac{2}{3} |u|^2 + \frac{5}{9}.
\]
For this choice we have
\[
N = \frac{|\mathcal{T}|}{|G_{a_1}|} = \frac{24}{6} = 4, \text{ for the minimum } a_1 = \left( \sqrt{\frac{2}{3}},\, 0,\, \frac{1}{\sqrt{3}} \right),
\]
with $D = \{ \cup gF \mid g \in G_{a_1} \}$ the simplicial cone generated by
\[
\left( 0,\, \sqrt{\frac{2}{3}},\, \frac{1}{\sqrt{3}} \right),\quad \left( 0,\, -\sqrt{\frac{2}{3}},\, \frac{1}{\sqrt{3}} \right), \quad \left( \sqrt{\frac{2}{3}},\, 0,\, -\frac{1}{\sqrt{3}} \right).
\]
Four copies of $D$ partition $\R^3$. Note that the boundary of the partition is made up of six reflection planes and coincides with the tetrahedral minimizing cone in $\R^3$ (see Figure \ref{figure6}).

\begin{figure}[t]
\begin{minipage}{0.8\textwidth}
\centering
\begin{picture}(0,0)%
\includegraphics{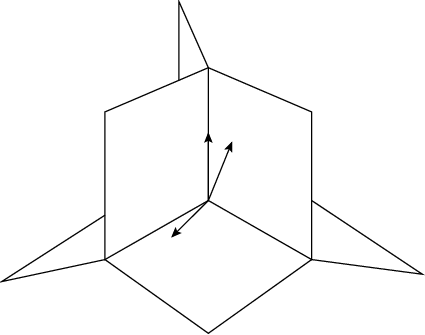}%
\end{picture}%
\setlength{\unitlength}{4144sp}%
\begin{picture}(3230,2554)(2464,-635)
\put(3090,-481){$a_{4}$}%
\put(3821,868){$\nu_{2}$}%
\put(4608,910){$\Pi_{1}$}%
\put(4299,826){$\nu_{1}$}%
\put(3617,  4){$\nu_{0}$}%
\put(3314,910){$\Pi_{2}$}%
\put(2640,699){$a_{3}$}%
\put(5284,700){$a_{2}$}%
\put(3990,-200){$a_{1}$}%
\end{picture}%
\caption{}
\label{figure6}
\end{minipage}
\end{figure}

Next, we focus on $\overline{D}$ and its walls $\Pi_1$, $\Pi_2$. Take $\nu_0 \in \interior (\overline{D}) $, $\nu_1 \in \interior (\Pi_1) $, and $\nu_2 \in \interior (\Pi_1 \cap \Pi_2) $, with $| \nu_i | = 1$, for $i=0,1,2$, and consider the limits
\begin{align}
&\lim_{\lambda \to +\infty} u(x + \lambda \nu_0) = u_0 (P_0 x),\label{lambda-limit-1}\\
&\lim_{\lambda \to +\infty} u(x + \lambda \nu_1) = u_1 (P_1 x),\label{lambda-limit-2}\\
&\lim_{\lambda \to +\infty} u(x + \lambda \nu_2) = u_2 (P_2 x),\label{lambda-limit-3}
\end{align}
for $x \in \R^3$. Here, $u$ is an equivariant solution produced by Theorem \ref{theorem1}, $P_0$, $P_1$, and $P_2$ are projections on the hyperplanes orthogonal to $\nu_0$, $\nu_1$, $\nu_2$ respectively, and $u_0$, $u_1$, $u_2$ are solutions to \eqref{system}, with $u_i : \R^i \to \R^3$, for $i = 0,1,2$, equivariant with respect to the subgroup $G_{\nu_i}$ of $\mathcal{T}$ leaving $\nu_i$ fixed, and connecting respectively $a_1$ to itself, $a_1$ to $a_2$, and $a_1$, $a_2$, and $a_3$ (in the sense of Theorem \ref{theorem1}).

The work of Gui and Schatzman \cite{gui-schatzman} is restricted to the tetrahedral group and their solution \emph{by construction} satisfies the list above. We note that one of the differences of Theorem \ref{theorem1} and the corresponding theorem in \cite{alikakos-fusco-arma} is that our solutions are not hierarchical by construction. It is intractable to build in the hierarchy for general groups or higher dimensions. \emph{A posteriori}, however, one expects to prove the complete hierarchical structure for certain groups. Theorem \ref{theorem1} already contains \eqref{lambda-limit-1}, with $u_0 \equiv a_1$. 

Below we state a general theorem establishing \eqref{lambda-limit-2} in the context of Theorem \ref{theorem1}. We formulate the theorem in an independent way. Consider the problem
\begin{equation}\label{problem}
\begin{cases}
\Delta u - W_u (u) = 0, &\text{in } \Omega \subset \R^n,\smallskip\\
u=u_0, &\text{on } \partial \Omega,
\end{cases}
\end{equation}
for $u: \Omega \to \R^m$, where $\Omega$ is a smooth open domain and $W$ is a $C^2$ potential. We have the following hypotheses on $W$ and $\Omega$.

\begin{h}\label{h1}
The potential $W$ is symmetric, with $W(u_1, u_2, \dots, u_m) = W(-u_1, u_2, \dots, u_m)$.
\end{h}

\begin{h}\label{h2}
There exists a nondegenerate minimum $a_+$ of the potential $W$ (cf.\ Hypothesis \ref{hypothesis1} in Section \ref{section3}) such that $a_1 > 0$ and $0=W(a_1) \leq W(u)$, for $u \in \R^m$, and such that
\[ W(a_+) < W(u), \]
for $|u - a_+| \leq q_0$, for some $q_0 >0$, with $u \neq a_+$.
\end{h}

\begin{h}\label{h3}
There holds
\[ u(Tx) = T u(x), \]
where $T$ is the reflection with respect to the first coordinate in either $x$ or $u$, that is, $Tx = (-x_1, x_2, \dots, x_n)$ and $Tu = (-u_1, u_2, \dots, u_m)$. Moreover, $u_0 (Tx) = T u_0(x)$.
\end{h}

\begin{h}\label{h4}
The domain $\Omega$ is globally Lipschitz and convex symmetric, in the sense that
\[ (x_1, x_2, \dots, x_n) \in \Omega \quad \text{implies} \quad (tx_1, x_2, \dots, x_n) \in \Omega, \]
for all $t$, with $|t| \leq 1$.
\end{h}

Finally, we have the following hypotheses on the solution and the connecting orbit.

\begin{h}\label{h5}
A solution $u: \Omega \to \R^m$ of problem \eqref{problem} is a \emph{global minimizer} if
\[
J(u; \Omega') = \min J(v; \Omega'), \text{ such that } v=u \text{ on } \partial \Omega',
\]
over the class of bounded and smooth domains $\Omega' \subset \Omega$ and over all equivariant maps $v(Tx) = Tv(x)$ in $W^{1,2} (\Omega; \R^m)$. (Cf.\ \eqref{global-minimizer} in Section \ref{section3}.)
\end{h}

\begin{h}\label{h6}
We assume that there is a unique orbit $U: \R \to \R^m$ connecting the minima $a_{\pm} = (\pm a_1, a_2, \dots, a_m)$ such that
\[ U'' - W_u(U) = 0, \text{ with } U(\pm \infty) = a_{\pm}, \]
which is also \emph{hyperbolic} in the class of symmetric variations $v(Tx) = Tv(x)$, for $v: \R \to \R^m$, that is, zero is not in the spectrum of the linearized operator
\[ Lv := v'' - W_{uu} (U)v, \text{ for symmetric } v \in W^{1,2} (\R; \R^m). \]
\end{h}

Under the above hypotheses, we have the following theorem.

\begin{theorem}[\cite{alikakos-fusco-hierarchy}]\label{theorem-new}
Assume that hypotheses {\rm (H\ref{h1})--(H\ref{h6})} hold. Then, there holds the estimate
\[ |u(x) - U(x_1)| \leq K \e^{-k \dist(x,\partial \Omega)}, \]
for $x \in \Omega$ and $K, k$ positive constants.
\end{theorem}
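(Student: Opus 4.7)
I would prove the estimate in two stages: stage (a) establishes the qualitative convergence $u(x) \to U(x_1)$ as $\dist(x, \partial\Omega) \to +\infty$ using (H\ref{h1})--(H\ref{h5}), while stage (b) extracts the quantitative exponential decay using the hyperbolicity of (H\ref{h6}).

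For stage (a), I argue by contradiction. Suppose there exist points $x_k \in \Omega$ with $d_k := \dist(x_k, \partial\Omega) \to +\infty$ but $|u(x_k) - U(x_{k,1})| \geq \varepsilon_0 > 0$. Write $x_k = (x_{k,1}, x_k')$ with $x_k' \in \R^{n-1}$ and translate only in the last $n-1$ coordinates, $v_k(y) := u(y_1, y' + x_k')$. By (H\ref{h4}) the translated domain $\Omega_k$ still has $x_1$-reflection symmetry, and by (H\ref{h3}) one has $v_k(Ty) = T v_k(y)$. Combining (H\ref{h2}) with standard $L^\infty$ bounds and interior elliptic estimates, along a subsequence $v_k \to v_\infty$ in $C^2_\mathrm{loc}$ (extracting also so that $x_{k,1} \to x_{\infty,1}$, possibly $\pm\infty$), where $v_\infty: \R^n \to \R^m$ is an entire, $T$-symmetric classical solution. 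By lower semicontinuity and (H\ref{h5}), $v_\infty$ is a symmetric global minimizer on all of $\R^n$. A sliding argument using minimality together with translation invariance in the $y'$-coordinates then forces $v_\infty(y) = \tilde U(y_1)$ for some one-dimensional $\tilde U$, which is a symmetric 1D heteroclinic from $a_-$ to $a_+$ once escape to other minima of $W$ at $y_1 \to \pm\infty$ is ruled out by a barrier argument using (H\ref{h2}) and the symmetry $\tilde U_1(0) = 0$. By the uniqueness in (H\ref{h6}), $\tilde U \equiv U$, which contradicts the construction.

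For stage (b), set $\phi(x) := u(x) - U(x_1)$; the symmetries of $u$ and $U$ give $\phi(Tx) = T\phi(x)$. It satisfies
\[ \Delta \phi - W_{uu}(U(x_1)) \phi = N(\phi), \quad |N(\phi)| \leq C|\phi|^2. \]
The hyperbolicity in (H\ref{h6}) provides, for symmetric $v \in W^{1,2}(\R;\R^m)$, the coercivity
\[ \int_\R \bigl( |v'|^2 + W_{uu}(U(x_1))\, v \cdot v \bigr) \dd x_1 \geq c \int_\R |v|^2 \dd x_1, \]
a consequence of the spectral gap of $L$ and the positive-definiteness $W_{uu}(a_\pm) > 0$ inherited from (H\ref{h2}). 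By stage (a), $|\phi|$ is arbitrarily small on $\Omega_t := \{x \in \Omega : \dist(x, \partial\Omega) > t\}$ for $t$ large, so $N(\phi)$ is absorbed. Testing the PDE against $\phi$ on $\Omega_t$, integrating by parts, and applying the slice-wise coercivity leads to a differential inequality $E'(t) \leq -2k E(t)$ for
\[ E(t) := \int_{\Omega_t} \bigl( |\nabla\phi|^2 + |\phi|^2 \bigr) \dd x, \]
yielding $E(t) \leq C \e^{-2kt}$. Interior elliptic sup-estimates on balls of fixed radius then convert this $L^2$ decay to the claimed pointwise bound.

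The main obstacle is stage (a), especially the 1D-reduction of the blow-down $v_\infty$. Showing $y'$-independence rests on minimality in the symmetric class via a sliding/variation argument; absent the symmetric restriction from (H\ref{h3}), this would amount to the vectorial analog of De Giorgi's conjecture, which is largely open. One must also rule out escape of $\tilde U$ to other minima of $W$ at $y_1 \to \pm\infty$, for which the symmetry $\tilde U_1(0) = 0$ and a barrier built from (H\ref{h2}) are crucial. In stage (b), the remaining subtleties are the non-smoothness of $\dist(\cdot, \partial\Omega)$ (handled by a standard Lipschitz regularization and a coarea slicing) and the passage from integrated to pointwise bounds via elliptic regularity.
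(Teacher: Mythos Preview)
The paper does not actually prove this theorem. As announced in the Introduction, Section~\ref{section6} is ``restricted to statements of results with explanations but without proofs, referring to published papers or preprints for the details,'' and Theorem~\ref{theorem-new} is simply attributed to the preprint \cite{alikakos-fusco-hierarchy}. So there is no argument here against which to compare your proposal; I can only assess it on its own merits.

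The two-stage template (qualitative convergence, then exponential upgrade via hyperbolicity) is reasonable, and stage~(b) is broadly sound in spirit. One technical point there: the coercivity you quote is for symmetric $v\in W^{1,2}(\R;\R^m)$, i.e.\ on the full line, whereas for a general $\Omega$ satisfying (H\ref{h4}) the $x_1$-section through a point of $\Omega_t$ is only a finite symmetric interval. Applying the spectral gap ``slice-wise'' therefore requires an additional argument (e.g.\ combining the gap near $a_\pm$ coming from $W_{uu}(a_\pm)\ge 2c^2$ with a Poincar\'e-type estimate on the remaining compact $x_1$-range), not merely a citation of (H\ref{h6}).

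The genuine gap is in stage~(a), at the step where you assert that the entire symmetric minimizer $v_\infty$ must be one-dimensional, $v_\infty(y)=\tilde U(y_1)$. You invoke a ``sliding argument using minimality together with translation invariance in the $y'$-coordinates,'' but sliding and moving-plane methods rest on the maximum principle, which is unavailable for general systems and is not restored by the single reflection symmetry in (H\ref{h3}). You yourself observe that without the symmetry this is essentially the vectorial De Giorgi conjecture; the point is that one reflection in $x_1$ does not close that gap either---it gives no control over the $y'$-dependence. Absent a concrete mechanism (for instance, an energy comparison in the equivariant class that penalizes $y'$-dependence, or a density/localization estimate in the spirit of the proof of Theorem~\ref{theorem1} forcing $v_\infty$ to lie near $\{a_+,a_-\}$ away from $\{y_1=0\}$ uniformly in $y'$), this step is unjustified, and the subsequent appeal to the uniqueness in (H\ref{h6}) is moot.
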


Finally, we state a theorem under no hypotheses of uniqueness or hyperbolicity of the connecting orbit. Our notation is that of Theorem \ref{theorem1} in Section \ref{section3} and $u$ is a solution as in that theorem.

\begin{theorem}[\cite{alikakos-bates}]
Let $\Pi_1$ be a wall of $D$ and assume that $a_2$ is the reflection of $a_1$ with respect to $\Pi_1$. Moreover, assume that the set of orbits connecting $a_1$ to $a_2$ is nonempty. Then, there exists a $\nu_1 \in \interior \Pi_1$, with $|\nu_1|=1$, and a sequence $\{ \lambda _k \} \to +\infty$ such that
\[ u(x + \lambda_k \nu_1) \to U(P_1 x), \]
where $U$ is a connection between $a_1$ and $a_2$, and $P_1$ is the orthogonal projection to ${\Pi_{1}}^{\!\bot}$.
\end{theorem}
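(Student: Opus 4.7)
The plan is to extract $v := \lim_{k\to\infty} u(\,\cdot\, + \lambda_k \nu_1)$ as a subsequential $C^2_{\rm loc}$-limit of translates along $\nu_1$, and then show $v$ has the form $v(x) = U(P_1 x)$. Choose $\nu_1 \in \interior \Pi_1$ in the relative interior of the face $\Pi_1 \cap \partial D$, so that $\dist(\lambda \nu_1, \Pi_j) = O(\lambda)$ for every other wall $\Pi_j \neq \Pi_1$. Use coordinates $x = (x_1, x_\parallel)$ with $x_1$ along $\Pi_1^\bot$ and $x_1 > 0$ on the $D$-side. The equivariance of $u$ under the reflection $g_1 \in G$ across $\Pi_1$, together with estimate (i) of Theorem \ref{theorem1} applied on the $D$-side and on its $g_1$-image, gives, for each fixed $x$ with $x_1 \neq 0$, that $u(x + \lambda \nu_1) \to a_1$ if $x_1>0$ and $\to a_2$ if $x_1<0$ as $\lambda \to +\infty$, exponentially in $|x_1|$ and locally uniformly in $x_\parallel$.

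The next step is uniform bounds and compactness. By constructing a $G$-equivariant comparison map from a near-minimizing 1D connection $V$ (with energy within $\varepsilon$ of $\sigma_{12} := \inf\{\int_\R (\tfrac{1}{2}|V'|^2 + W(V)) \dd x_1 : V(\pm \infty) = a_\pm\}$), smoothly matched to $u$ in a thin boundary layer of a symmetric open set containing $B_R(\lambda \nu_1)$, and invoking the global minimizer property \eqref{global-minimizer}, one obtains
\[ J(u; B_R(\lambda \nu_1)) \le C(R)\, R^{n-1}, \]
uniformly for large $\lambda$. Combined with the $L^\infty$-bound on $u$ from Hypothesis \ref{hypothesis2} and standard elliptic regularity for \eqref{system}, this gives uniform $C^{2,\alpha}_{\rm loc}$-bounds on $u_\lambda(x) := u(x+\lambda \nu_1)$. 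A diagonal extraction then produces a subsequence $\lambda_k \to +\infty$ along which $u_{\lambda_k} \to v$ in $C^2_{\rm loc}(\R^n;\R^m)$, with $\Delta v - W_u(v) = 0$; passing to the limit in the reflection equivariance and in the minimization property, $v$ satisfies $v(g_1 x) = g_1 v(x)$ and is itself a global minimizer within the $g_1$-equivariant class.

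It remains to show $v$ depends only on $x_1$. For each $x_\parallel$, the slice $x_1 \mapsto v(x_1, x_\parallel)$ is admissible for the 1D connection problem and so has 1D energy $\ge \sigma_{12}$. On cylinders $Q_{R,S} = \{|x_\parallel|<R\} \times \{|x_1|<S\}$, integrating this pointwise lower bound and matching it against the upper bound
\[ J(v; Q_{R,S}) \le \sigma_{12}\, \omega_{n-1} R^{n-1} + O(R^{n-2}), \]
produced by plugging a near-minimizer $V$ into the minimization property of $v$ with interpolation in a thin shell, yields
\[ \int_{Q_{R,S}} \tfrac{1}{2} |\nabla_\parallel v|^2 = o(R^{n-1}) \text{ as } R \to +\infty. \]
A secondary blow-down in tangential directions --- each tangential translate of $v$ is again a global minimizer with the same asymptotics --- followed by a further diagonal extraction upgrades this integral decay to the pointwise identity $\nabla_\parallel v \equiv 0$ along a suitable sub-subsequence of $\{\lambda_k\}$. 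Then $v = U(x_1)$ for some $U$ realizing $\sigma_{12}$ with limits $a_\pm$ at $\pm\infty$, i.e., a connection between $a_1$ and $a_2$.

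The main obstacle is the last upgrade, from integral near-saturation to pointwise tangential rigidity of $v$. In Theorem \ref{theorem-new} the analogous step was closed by the spectral hypothesis (H\ref{h6}); here neither uniqueness nor hyperbolicity of the 1D profile is available, so one must replace the spectral argument by a softer, purely variational mechanism --- iterated blow-down within the class of $g_1$-equivariant global minimizers with a priori energy growth $C R^{n-1}$. Making this precise is the essential content of the proof in \cite{alikakos-bates}.
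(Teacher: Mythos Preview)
The paper does not contain a proof of this theorem. As stated in the introduction, Section~\ref{section6} is ``restricted to statements of results with explanations but without proofs, referring to published papers or preprints for the details,'' and this particular theorem is attributed to \cite{alikakos-bates}, which is listed as \emph{In preparation}. There is therefore no argument in the paper against which your proposal can be compared.

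As a standalone sketch, your outline is reasonable in its first two stages: the compactness/extraction of a $C^2_{\rm loc}$ subsequential limit $v$ of the translates $u(\cdot+\lambda_k\nu_1)$, the passage of the equation and of the $g_1$-equivariance to the limit, and the $O(R^{n-1})$ energy upper bound via comparison with a one-dimensional near-connection are all standard and in the spirit of the setup around \eqref{global-minimizer} and Theorem~\ref{theorem-new}. You also correctly locate the genuine difficulty, namely the step from the integral near-saturation
\[
\int_{Q_{R,S}} \tfrac{1}{2}\,|\nabla_\parallel v|^2 \;=\; o(R^{n-1})
\]
to the pointwise conclusion $\nabla_\parallel v\equiv 0$. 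However, your proposed mechanism for closing this gap --- ``iterated blow-down within the class of $g_1$-equivariant global minimizers'' --- is asserted rather than carried out, and as written it is not a proof: a blow-down of $v$ in the tangential directions produces a new limit object, not a statement about $v$ itself, and without a rigidity or Liouville-type input (precisely what (H\ref{h6}) supplied in Theorem~\ref{theorem-new}) it is unclear how one feeds that information back to conclude for the original sequence. You are candid that ``making this precise is the essential content of the proof in \cite{alikakos-bates},'' which is fair, but it means your proposal is a plausible road map rather than a proof, and the decisive step remains open in what you have written.
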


\begin{remark}
One would expect that a stronger version of the above theorem holds: Given any $\nu_1 \in \interior (\Pi_1)$ and any sequence $\lambda_k \to +\infty$ there exists a subsequence $\{ {\lambda_{k}}' \}$ of $\{ \lambda_{k} \}$ such that $u(x + {\lambda_{k}}' \nu_1) \to U(P_1 x)$.
\end{remark}

\nocite{*}
\bibliographystyle{plain}

\end{document}